\def\thetitle{{Right-angled Artin groups and finite subgraphs of curve graphs}}
\let\@@enum@org\@@enum@
\def\@@enum@[#1]{\@@enum@org[\normalfont #1]}
\newtheorem{thm}{Theorem}
\newtheorem{lem}[thm]{Lemma}
\newtheorem{prop}[thm]{Proposition}
\newtheorem{que}{Question}
\theoremstyle{remark}
\newtheorem*{rem}{Remark}
\theoremstyle{definition}
\newtheorem{defn}[thm]{Definition}
\newcommand\form[1]{\langle #1\rangle}
\newcommand\co{\colon}
\newcommand\Z{\mathbb{Z}}
\newcommand\supp{\operatorname{supp}}
\newcommand\lk{\operatorname{Lk}}
\newcommand\st{\operatorname{St}}
\newcommand\gex{{\Gamma}^e}
\newcommand\aga{{A(\Gamma)}}
\newcommand\gam{\Gamma}
\newcommand\bZ{\mathbb{Z}}
\newcommand\Mod{\operatorname{Mod}}
\newcommand\mC{\mathcal{C}}
\begin{document}

\title\thetitle

\author{Sang-hyun Kim}
\address{Department of Mathematical Sciences, KAIST, 335 Gwahangno, Yuseong-gu, Daejeon 305-701, Republic of Korea}
\email{shkim@kaist.edu}

\author{Thomas Koberda}
\address{Department of Mathematics, Yale University, 20 Hillhouse Ave, New Haven, CT 06520, USA}
\email{thomas.koberda@gmail.com}
\date{\today}
\keywords{right-angled Artin group, mapping class group, curve complex}
\begin{abstract}
We show that for a sufficiently simple surface $S$, a right-angled Artin group $A(\Gamma)$ embeds into $\Mod(S)$ if and only if $\Gamma$ embeds into the curve graph $\mC(S)$ as an induced subgraph. When $S$ is sufficiently complicated, there exists an embedding $A(\Gamma)\to\Mod(S)$ for some $\Gamma$ not contained in $\mC(S)$.
\end{abstract}
\maketitle
\section{Introduction}
\subsection{Statement of the main results}
Let $S=S_{g,n}$ be a connected orientable surface of genus $g$ and $n$ punctures, and let $\Mod(S)$ denote its mapping class group.  As is standard, we will write \[\xi(S)=\max(3g-3+n,0)\] for the \emph{complexity} of $S$, a measure which coincides with the number of components of a maximal multicurve on $S$.
We will use $\mC(S)$ to denote the \emph{curve graph} of $S$, which is the $1$--skeleton of the \emph{curve complex} of $S$.  The vertices of $\mC(S)$ are isotopy classes of essential, nonperipheral, simple closed curves on $S$.  Two vertices are adjacent if the corresponding isotopy classes admit disjoint representatives.

Let $\gam$ be a finite simplicial graph with vertex set $V(\gam)$ and edge set $E(\gam)$.  We will write $\aga$ for the \emph{right-angled Artin group} on $\gam$, which is defined by \[\aga=\langle V(\gam)\mid [v_i,v_j]=1 \textrm{ if and only if } \{v_i,v_j\}\in E(\gam)\rangle.\]

For two groups $G$ and $H$, we will write $G\le H$ if there is an embedding from $G$ into $H$.
Similarly, we write $\Lambda\le \Gamma$ for two graphs $\Lambda$ and $\Gamma$ if $\Lambda$ is isomorphic to an induced subgraph of $\Gamma$.
The central question of this article is exactly when the converse of the following result of the second author is true: 

\begin{thm}[\cite{Koberda2012}, Theorem 1.1 and Proposition 7.16]\label{t:kob}
If $\Gamma\le \mC(S)$ then $A(\Gamma)\le \Mod(S)$.
\end{thm}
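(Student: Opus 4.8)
The plan is to realize the induced embedding $\Gamma\le\mC(S)$ by a family of simple closed curves and push it forward to a group generated by suitable powers of Dehn twists. First I would fix isotopy classes $\{\gamma_v : v\in V(\Gamma)\}$ of essential, nonperipheral simple closed curves on $S$ so that $\gamma_v$ and $\gamma_w$ admit disjoint representatives exactly when $\{v,w\}\in E(\Gamma)$. For an integer $N\ge 2$ to be chosen later, define $\phi_N\co\aga\to\Mod(S)$ on generators by $\phi_N(v)=T_{\gamma_v}^N$, where $T_{\gamma_v}$ is the Dehn twist about $\gamma_v$. Since disjoint curves have commuting Dehn twists, $[\phi_N(v),\phi_N(w)]=1$ whenever $\{v,w\}\in E(\Gamma)$, so $\phi_N$ respects the defining relations of $\aga$ and is a well-defined homomorphism. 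One genuinely needs $N\ge2$: if $\gamma_v,\gamma_w$ fill a one-holed torus and meet once, then $T_{\gamma_v}$ and $T_{\gamma_w}$ satisfy a braid relation, so $\phi_1$ need not be injective on the two-vertex graph with no edge.

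The substance is to show that $\phi_N$ is injective once $N$ is large enough, and I would do this by a ping-pong argument organized through the subsurface projections of Masur--Minsky. For each $v$ the twist $T_{\gamma_v}$ acts on the annular curve graph $\mC(\gamma_v)$, which is quasi-isometric to $\Z$, as a unit translation, so $\phi_N(v)$ displaces points there by a distance comparable to $N$; and if $\gamma_v,\gamma_w$ are \emph{not} disjoint --- a non-edge of $\Gamma$ --- they overlap, hence obey the Behrstock inequality $\min\{d_{\gamma_v}(\gamma_w,\mu),\,d_{\gamma_w}(\gamma_v,\mu)\}\le B$ for a universal $B$ and every curve $\mu$ crossing both. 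Given a nontrivial $g\in\aga$, I would conjugate it so that its reduced form $g=v_{i_1}^{n_1}\cdots v_{i_\ell}^{n_\ell}$ is cyclically reduced, pick a complete marking $\mu$ on $S$ (so that $\mu$ has a well-defined projection to every annulus $\gamma_v$), and track the annular projection distances $d_{\gamma_{i_j}}$ of the successive partial images of $\mu$ under $\phi_N(g)$. Because $g$ is reduced, between any two syllables on the same curve there sits a syllable on an overlapping curve; the Behrstock inequality then keeps the large displacements along the various annular graphs from interfering, and one is left with $d_{\gamma_{i_1}}(\mu,\phi_N(g)\mu)\gtrsim N-C$ for a constant $C$ depending only on the fixed family of curves. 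Choosing $N>C$ gives $\phi_N(g)\mu\ne\mu$, hence $\phi_N(g)\ne1$.

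The hard part will be precisely that last estimate: one must combine a normal form for $\aga$ --- for instance the reduced-word / syllable normal form, which gives ``$g$ reduced'' genuine combinatorial content --- with the nonpositively curved behavior of subsurface projections, and verify that no cancellation occurs across the whole word rather than merely between adjacent syllables. I expect the cleanest way to make this rigorous is an induction on syllable length, or else to replace the individual annular graphs by a Bestvina--Bromberg--Fujiwara projection complex on which $\phi_N(\aga)$ acts and play ping-pong on that complex. An alternative --- and essentially the route taken in \cite{Koberda2012} --- sidesteps the curve complex entirely: one passes to the action on $H_1$ of a well-chosen finite characteristic cover $\tilde S\to S$, on which $\phi_N(v)$ acts as a product of high powers of symplectic transvections, and then runs a linear ping-pong inside $\operatorname{Sp}(2g',\Z)$; the difficulty there migrates to arranging that the transvection directions detect the non-edges of $\Gamma$.
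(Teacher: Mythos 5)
This paper does not actually prove Theorem~\ref{t:kob}: it is imported wholesale from \cite{Koberda2012}, and its sharp form --- that for all sufficiently large $N$ the assignment $v\mapsto T_{\gamma_v}^N$ is injective on $\aga$ --- is recorded here as Theorem~\ref{t:raagmcg}(1). Your setup (realize $\Gamma\le\mC(S)$ by curves, send $v$ to $T_{\gamma_v}^N$, check that edges give commuting twists, and observe via the braid relation that $N=1$ can fail) is exactly that statement, so the comparison has to be made with the cited sources rather than with anything in this paper.

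Measured against those, your sketch identifies a correct strategy but is not yet a proof, because the step you defer --- the lower bound $d_{\gamma_{i_1}}(\mu,\phi_N(g)\mu)\gtrsim N-C$ for cyclically reduced $g$ --- \emph{is} the whole theorem. The Behrstock inequality by itself only controls a pair of overlapping annuli; to prevent cancellation across the entire word one needs the large-links and ordering machinery for the family of annular domains that $g$ penetrates (the partial order on overlapping subsurfaces with projection distance above a threshold), together with the check that the threshold can be beaten uniformly in terms of the fixed finite configuration $\{\gamma_v\}$. That is precisely the content of \cite{CLM2012}, which is the cleanest published execution of your second paragraph, and your combinatorial input --- in a reduced word of $\aga$, any two syllables on the same generator are separated by a syllable on a non-commuting, hence overlapping, generator --- is indeed the right way to feed the normal form into those estimates. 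Two smaller points: the constant $C$ must be shown to depend only on the curve system and not on the word length, which is where the induction on syllable length genuinely earns its keep; and the ``alternative route'' you attribute to \cite{Koberda2012} is described too loosely --- the injectivity argument there runs through the dynamics of the twists on the space of projective measured laminations rather than a single homological ping-pong on a finite cover. For the purposes of this paper one simply cites Theorem~\ref{t:raagmcg}(1), as the authors do.
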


We first prove the following results.

\begin{thm}\label{t:main1}
Let $S$ be a surface with $\xi(S)<3$.
Then $A(\Gamma)\le\Mod(S)$ if and only if $\Gamma\le\mC(S)$.
\end{thm}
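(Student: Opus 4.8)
The forward implication is exactly Theorem~\ref{t:kob}, so the task is to prove that $A(\Gamma)\le\Mod(S)$ forces $\Gamma\le\mC(S)$ when $\xi(S)<3$. When $\xi(S)\le 1$ this is easy: $\Mod(S)$ is then finite (for a sphere with at most three punctures) or virtually free (for $S_{1,0}$, $S_{1,1}$, $S_{0,4}$). A right-angled Artin group sitting inside a virtually free group contains no $\Z^2$, since an edge of $\Gamma$ would produce a $\Z^2$ and a free group contains none; hence $\Gamma$ has no edge. On the other hand, for each such $S$ any two distinct essential simple closed curves intersect, so $\mC(S)$ has no edges either, and $\Gamma\le\mC(S)$ is equally equivalent to $\Gamma$ having no edge. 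Thus the equivalence holds for $\xi(S)\le 1$, and from now on $S\in\{S_{0,5},S_{1,2}\}$, so $\xi(S)=2$.

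\emph{Necessary conditions on $\Gamma$.} Fix an embedding $\phi\co A(\Gamma)\hookrightarrow\Mod(S)$. Since a maximal multicurve on $S$ has two components, $\Mod(S)$ has no $\Z^3$, so $\Gamma$ has no triangle (as $A(K_3)=\Z^3$). I also claim $\Mod(S)$ has no $F_2\times F_2$. Writing such a subgroup as $H_1\times H_2$: if $H_1$ were irreducible it would contain a pseudo-Anosov $f$, and then $H_2$, which centralizes $f$, would lie in the virtually cyclic normalizer of $\langle f\rangle$, which is impossible; so $H_1$, and symmetrically $H_2$, preserves the canonical reduction system $\sigma$ of $H_1$. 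Cutting along $\sigma$ and passing to finite-index subgroups that fix each component, some component $W$ with $\xi(W)=1$ carries a non-virtually-abelian image of $H_1$, hence a pseudo-Anosov of $W$; then the image of $H_2$ in $\Mod(W)$ centralizes it and is virtually cyclic, while the kernel of $H_2\to\Mod(W)$ is supported on $S\setminus W$, which has complexity $0$, and is therefore virtually abelian. But a nontrivial normal subgroup of a non-abelian free group with virtually cyclic quotient is either of finite index, hence non-abelian free, or of infinite rank --- in either case it is not virtually abelian, a contradiction. Since $A(C_4)=F_2\times F_2$, $\Gamma$ has no induced $C_4$; and a triangle-free graph with no induced $C_4$ has girth at least $5$. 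So $A(\Gamma)\le\Mod(S)$ forces $\Gamma$ to have girth at least $5$ (forests included).

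\emph{Reduction and the construction to be carried out.} By the previous paragraph together with Theorem~\ref{t:kob}, Theorem~\ref{t:main1} for $S\in\{S_{0,5},S_{1,2}\}$ follows once we show that every finite graph of girth at least $5$ embeds into $\mC(S)$ as an induced subgraph --- this closes the chain $A(\Gamma)\le\Mod(S)\Rightarrow\operatorname{girth}(\Gamma)\ge 5\Rightarrow\Gamma\le\mC(S)\Rightarrow A(\Gamma)\le\Mod(S)$. I would prove the remaining implication by a vertex-by-vertex construction. Infinite-diameter hyperbolicity of $\mC(S)$ already supplies arbitrarily long induced paths (geodesics); to realize a general girth-$\ge 5$ graph $\Lambda$, one grows an induced realization of $\Lambda$ by adding one vertex at a time, exploiting two features of $\mC(S)$: it is locally infinite, and in any complexity-one subsurface $Y\subset S$ distinct curves always intersect. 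The second feature is what makes the realizations genuinely induced at no cost: a new curve chosen inside a suitable $Y$ disjoint from its prescribed neighbours automatically intersects every previously placed curve that lies in $Y$, realizing all the required non-edges among pairwise-nonadjacent siblings at once, while distance estimates in $\mC(S)$ keep the new curve far enough from the remaining curves to avoid creating short cycles. One may seed the construction with a concrete example --- e.g.\ the ``pentagon/pentagram'' curves realizing the Petersen graph in $\mC(S_{0,5})$ --- and treat the two surfaces in parallel, expecting the hyperelliptic relationship between $S_{1,2}$ and $S_{0,5}$ to let one transfer configurations between them.

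\emph{Main obstacle.} The construction step is the crux. Long induced paths are free, but realizing an \emph{arbitrary} finite girth-$\ge 5$ graph by genuinely induced curves on a surface as small as $S_{0,5}$ or $S_{1,2}$ --- enforcing every prescribed intersection while never accidentally making two curves disjoint that ought to cross --- requires a careful and probably case-heavy analysis of these two curve graphs, together with (on the necessity side) a careful treatment of reducible subgroups of $\Mod(S)$.
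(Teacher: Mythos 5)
Your reduction to $\xi(S)=2$ and the necessity half of your argument (no triangles since $\Mod(S)$ has no $\Z^3$, no induced $C_4$ since $\Mod(S)$ has no $F_2\times F_2$, hence girth at least $5$) are fine, and your treatment of $\xi(S)\le 1$ agrees with the paper's. But the step you defer as the ``main obstacle'' --- that every finite graph of girth at least $5$ embeds into $\mC(S)$ as an induced subgraph --- is not merely unproved; it is false, so the chain $A(\Gamma)\le\Mod(S)\Rightarrow\operatorname{girth}(\Gamma)\ge 5\Rightarrow\Gamma\le\mC(S)$ cannot be closed. Every essential simple closed curve on $S_{0,5}$ separates the sphere and cuts off a twice-punctured disk, so it determines a $2$--element subset of the $5$ punctures, and two disjoint non-isotopic curves determine disjoint subsets. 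This gives a graph homomorphism from $\mC(S_{0,5})$ to the Kneser graph $K(5,2)$, i.e.\ the Petersen graph, which is $3$--chromatic; hence every finite induced subgraph of $\mC(S_{0,5})$ is $3$--colorable. By Erd\H{o}s's theorem there are finite graphs of girth at least $5$ (indeed, arbitrarily large girth) and chromatic number $4$, and no such graph embeds into $\mC(S_{0,5})$. Since $\mC(S_{1,2})\cong\mC(S_{0,5})$, the same obstruction rules out both complexity--two surfaces. So girth at least $5$ is a necessary but far from sufficient condition, and a vertex-by-vertex curve construction cannot realize an arbitrary girth--$5$ graph.

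The paper avoids characterizing the induced subgraphs of $\mC(S)$ altogether. Instead it normalizes the given embedding: by Lemma~\ref{lem:standard} (built on Birman--Lubotzky--McCarthy and on raising generators to powers, then using transvections $w\mapsto v^Pw^Q$ to shrink supports), one may assume each generator maps to a multi-twist and no generator's support is contained in another's. When $\xi(S)=2$ the curve graph is triangle-free, so a two-component support would be a maximal clique and, by condition (ii) of the lemma, would force the corresponding vertex to be isolated; for a connected $\Gamma$ with an edge each generator is therefore a power of a single Dehn twist, and the supporting curves themselves realize $\Gamma$ inside $\mC(S)$. If you want to repair your argument, this normalization step --- extracting actual curves from the group embedding rather than trying to build them from combinatorial constraints on $\Gamma$ --- is the missing idea.
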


\begin{thm}\label{t:main2}
Let $S$ be a surface with $\xi(S)>3$.
Then there exists a finite simplicial graph $\Gamma$ such that $A(\Gamma)\le \Mod(S)$ but $\Gamma\not\le\mC(S)$.
\end{thm}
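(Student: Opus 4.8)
The plan is to construct a single ``seed'' counterexample $\Gamma_0$ at complexity $4$ and then to propagate it to every larger complexity by taking joins with cliques. Besides Theorem~\ref{t:kob}, the construction would use the \emph{extension graph} $\Delta^e$ of a finite graph $\Delta$, whose vertices are the conjugates of the standard generators of $A(\Delta)$ and where two vertices are adjacent precisely when the corresponding group elements commute. I would invoke the facts --- from the authors' earlier work on extension graphs --- that $\omega(\Delta^e)=\omega(\Delta)$ and that $A(\Lambda)\le A(\Delta)$ for every finite induced subgraph $\Lambda$ of $\Delta^e$. I would also use the elementary observation that $\lk(\gamma)$ in $\mC(S)$ is the curve graph of $S\setminus\gamma$, whose complexity is at most $\xi(S)-1$; hence if a cone $\{v\}\ast\Theta$ embeds in $\mC(S)$ then $\Theta$ embeds in the curve graph of a surface of total complexity $\le\xi(S)-1$, and when $\Theta$ is join-indecomposable one may take that surface connected.

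\emph{The seed.} Take $\Delta=P_4$, the path on four vertices (or any similar triangle-free graph with $\Delta\le\mC(W)$ for $W$ of complexity $2$); then $\omega(P_4)=2$, and $\mC(W)$ contains an induced $P_4$ for every connected surface $W$ of complexity $2$, since such $\mC(W)$ has infinite diameter and hence contains arbitrarily long geodesic segments, so Theorem~\ref{t:kob} gives $A(P_4)\le\Mod(W)$. The combinatorial input I need is a finite, join-indecomposable, triangle-free induced subgraph $\Lambda$ of $P_4^e$ which contains an edge but is \emph{not} an induced subgraph of $\mC(W)$ for any connected $W$ with $\xi(W)\le 2$, i.e.\ of neither $\mC(S_{0,5})$ nor $\mC(S_{1,2})$. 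Granting $\Lambda$, set $\Gamma_0=K_2\ast\Lambda$; then $A(\Gamma_0)=\bZ^2\times A(\Lambda)$ with $A(\Lambda)\le A(P_4)$, and realizing the $\bZ^2$ by two Dehn twists about a two-component multicurve and $A(P_4)\ge A(\Lambda)$ by curves interior to a complexity-$2$ piece of the complement shows $A(\Gamma_0)\le\Mod(S)$ whenever $\xi(S)=4$; moreover $\omega(\Gamma_0)=4$. Conversely, if $\Gamma_0=\{v_1\}\ast\{v_2\}\ast\Lambda$ embedded in $\mC(S)$ with $\xi(S)=4$, then realizing $v_1,v_2$ by curves and applying the cone observation twice would force $\Lambda\le\mC(W)$ for a connected $W$ with $\xi(W)\le2$ (the argument being insensitive to disconnectedness, using total complexity), contrary to the choice of $\Lambda$; so $\Gamma_0\not\le\mC(S)$ for every $S$ with $\xi(S)\le4$.

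\emph{The propagation.} Given $S$ with $\xi(S)=m>4$, put $\Gamma=K_{m-4}\ast\Gamma_0$, so $A(\Gamma)=\bZ^{m-4}\times A(\Gamma_0)$. I would choose a multicurve $c$ with $m-4$ components such that one component $W$ of $S\setminus c$ has complexity $4$ and the others are pairs of pants; then $A(\Gamma_0)\le\Mod(W)$ (by the seed construction, which uses only curves interior to $W$) commutes with the $m-4$ Dehn twists about the components of $c$, so $A(\Gamma)\le\Mod(S)$. If $\Gamma$ embedded in $\mC(S)$, then $K_{m-4}$ would be realized by a multicurve $\delta$ and every vertex of $\Gamma_0$ by a curve disjoint from $\delta$, so that $\Gamma_0$ would embed into $\lk(\delta)$ --- the join of the curve graphs of the components of $S\setminus\delta$, of total complexity $\le\xi(S)-(m-4)=4$ --- which the seed step excludes. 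Since $\omega(\Gamma)=m$, these $\Gamma$ range over all complexities $>3$, and the theorem follows once the seed has been produced.

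\emph{The main obstacle.} The crux, and the step I expect to be genuinely hard, is exhibiting $\Lambda$ and \emph{proving} that it embeds in the curve graph of no surface of complexity $\le 2$. The natural tools for this non-embedding are a sufficiently explicit understanding of which finite triangle-free graphs occur inside $P_4^e$, together with the rigidity of the curve graphs of $S_{0,5}$ and $S_{1,2}$ --- for instance the fact that in each of these the link of every vertex is an independent set, which already rules out many configurations --- sharpened enough to exclude the particular $\Lambda$. A further, more routine, point is to verify, uniformly over the finitely many topological types of surface of each complexity, that the Dehn-twist-and-subsurface arrangements required in the seed and in the propagation can actually be realized; this uses only the freedom in choosing the multicurves and the complexity-$4$ subsurface.
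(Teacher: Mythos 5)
Your propagation step (joining the seed with cliques and observing that the link of a multicurve in $\mC(S)$ is the join of the curve graphs of the complementary pieces) is sound, and is essentially the argument the paper itself uses to pass from complexity four to higher complexity. The fatal problem is the seed: the graph $\Lambda$ you postulate cannot exist, and this is not merely a hard step you have deferred --- it is ruled out by Theorem~\ref{t:main1} of this very paper. You require $\Lambda$ to be a finite induced subgraph of $P_4^e$, which by the extension-graph theory you invoke gives $A(\Lambda)\le A(P_4)$. Since $P_4\le\mC(S_{0,5})$, Theorem~\ref{t:kob} gives $A(P_4)\le\Mod(S_{0,5})$, hence $A(\Lambda)\le\Mod(S_{0,5})$. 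But $\xi(S_{0,5})=2<3$, so Theorem~\ref{t:main1} forces $\Lambda\le\mC(S_{0,5})$, contradicting your defining requirement that $\Lambda$ embed in the curve graph of no connected surface of complexity at most two. The same argument kills any triangle-free replacement $\Delta$ for $P_4$: whatever complexity-two surface $W$ carries $\Delta$ as an induced subgraph of $\mC(W)$ also carries every finite induced subgraph of $\Delta^e$, again by Theorems~\ref{t:kob} and~\ref{t:main1}. In short, no counterexample to the converse of Theorem~\ref{t:kob} can be manufactured inside the RAAG of a graph that itself lives on a low-complexity surface; the obstruction must genuinely use complexity four.

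The paper's construction is shaped precisely to avoid this trap. It produces a graph $\Gamma_1$ containing triangles, realizable in $\mC(S)$ only when $\xi(S)\ge 4$, together with an injection $A(\Gamma_0)\to A(\Gamma_1)$ sending a distinguished vertex $q$ to the product $ef$ of two commuting generators --- that is, to a multitwist supported on a two-component multicurve rather than to a single curve. The non-embeddability of $\Gamma_0$ into $\mC(S)$ for $\xi(S)=4$ is then proved by classifying the subsurface decompositions forced by the induced square $C_4\subset\Gamma_0$ (Lemma~\ref{lem:c4}) and checking that no single curve can have the adjacency pattern of $q$. If you want to salvage your outline, the seed must be of this kind; your clique-join propagation can then be kept essentially as written, provided the seed graph is anti-connected so that its curves cannot be split across components of the complement of the clique multicurve.
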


A \emph{clique} is a subset of the vertex set which spans a complete subgraph.
A \emph{facet} of a triangulation of a manifold means a top-dimensional simplex.
For a positive integer $N$, we will say that $\gam$ has \emph{$N$--thick stars} if each vertex $v$ of $\gam$ is contained in two cliques $K_1\cong K_2$ on $N$ vertices of $\gam$ whose intersection is exactly $v$.  Equivalently, the link $\lk(v)$ of $v$ in $\gam$ contains two disjoint copies of complete graphs on $N-1$ vertices.  
For example, a \emph{proper} (namely, no two facets share more than one faces) triangulation of a compact surface with no triangular links has $3$-thick stars. The following generalization is immediate.

\begin{prop}
A proper triangulation of a compact $(N-1)$--manifold has $N$-thick stars if and only if the link of each vertex has at least $N+1$ facets.
\end{prop}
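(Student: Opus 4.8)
Since having $N$--thick stars and the hypothesis on facet numbers are both vertex-local conditions, the plan is to fix a vertex $v$ of the triangulation $T$, write $\Gamma$ for the $1$--skeleton of $T$, and analyse the link $L=\lk(v)$, which is a proper triangulation of a compact $(N-2)$--manifold and, when $v$ is an interior vertex, a closed $(N-2)$--pseudomanifold. First I would record the dictionary: the facets of $L$ are its $(N-2)$--simplices $\sigma$; the assignment $\sigma\mapsto v*\sigma$ is a bijection from facets of $L$ onto the top-dimensional simplices of $T$ through $v$, so each facet $\sigma$ yields an $N$--clique $\{v\}\cup V(\sigma)$ of $\Gamma$ containing $v$; two such cliques $\{v\}\cup V(\sigma_1)$, $\{v\}\cup V(\sigma_2)$ meet only in $v$ precisely when $V(\sigma_1)\cap V(\sigma_2)=\emptyset$; and, by definition, $\Gamma$ is $N$--thick at $v$ if and only if $\lk_\Gamma(v)=L^{(1)}$ contains two disjoint copies of $K_{N-1}$. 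I would then prove the forward implication ``$N$--thick $\Rightarrow$ at least $N+1$ facets'' directly, and reduce the reverse implication to the purely combinatorial statement that a proper triangulation of a compact $(N-2)$--manifold with at least $N+1$ facets has two vertex-disjoint facets --- indeed two vertex-disjoint facets of $L$ are in particular two disjoint copies of $K_{N-1}$ inside $L^{(1)}$, hence witness $N$--thickness at $v$.

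For ``$N$--thick $\Rightarrow$ at least $N+1$ facets'' I would use a degree count. If $v$ lies in two $N$--cliques of $\Gamma$ meeting only in $v$, then deleting $v$ exhibits at least $2(N-1)$ distinct vertices of $L$. Since $L$ is a closed $(N-2)$--pseudomanifold, for each vertex $u$ of $L$ the link $\lk_L(u)$ is a closed $(N-3)$--pseudomanifold, and any closed $d$--pseudomanifold has at least $d+2$ facets --- a fixed facet, together with the one further facet along each of its $d+1$ ridges (these are distinct, since two distinct facets share at most one ridge), already gives $d+2$ of them. Hence every vertex of $L$ lies in at least $N-1$ facets of $L$, so if $f$ and $m$ denote the numbers of facets and vertices of $L$ then double counting vertex--facet incidences gives $(N-1)f=\sum_u\deg_L(u)\ge(N-1)m$, where $\deg_L(u)$ is the number of facets of $L$ through $u$; thus $f\ge m\ge 2(N-1)\ge N+1$ (using $N\ge 3$). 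The bordered case runs identically, since links of vertices of a triangulated $(N-2)$--disc still have at least $N-1$ facets. Note that this argument uses only the pseudomanifold structure and the vertex count, so the possible presence of ``empty'' simplices in $T$ is irrelevant to it.

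The reverse implication --- at least $N+1$ facets forces two vertex-disjoint facets of $L$ --- is the step I expect to be the main obstacle and the real content of the proposition. Fix a facet $\sigma_1$ of $L$ with vertex set $\{u_1,\dots,u_{N-1}\}$. If $L$ has no facet disjoint from $\sigma_1$, then every facet of $L$ contains some $u_i$, i.e.\ $L=\st(u_1)\cup\cdots\cup\st(u_{N-1})$: the closed $(N-2)$--manifold $L$ is covered by the $N-1$ closed stars $\st(u_i)=u_i*\lk_L(u_i)$, overlapping pairwise in subcomplexes of dimension at most $N-3$. The plan is to contradict this under the assumption $f\ge N+1$. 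For $N=3$ this is exactly the surface statement quoted above and is immediate: $L$ is then a cycle, a cycle covered by two vertex-stars has at most three edges, and so a cycle with at least four edges contains two disjoint edges. In general I would estimate $f$ over this cover by inclusion--exclusion, expanding $\st(u_i)=u_i*\lk_L(u_i)$ and using that the $(N-3)$--manifolds $\lk_L(u_i)$ are forced by the global manifold structure of $L$ to close up on one another in a controlled way. The delicate point --- and where the bulk of the work lies --- is that a single star in an arbitrary complex can carry any number of facets, so the naive count over the cover can overshoot and one must make genuine use of the manifold topology of $L$ (not merely of properness), keeping the resulting bound sharp enough to reach the threshold $N+1$ uniformly in $N$ and to accommodate the bordered case.

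Putting the pieces together: the degree count gives ``$N$--thick $\Rightarrow f\ge N+1$'', and the small-cover argument gives ``$f\ge N+1\Rightarrow$ two vertex-disjoint facets of $L\Rightarrow\Gamma$ is $N$--thick at $v$'', so the equivalence holds at each vertex, hence for $T$. The only place one might fear an ``empty'' simplex causes trouble is the dictionary between cliques of $\Gamma$ and faces of $T$, but it is harmless here: the forward direction is proved by counting vertices rather than by matching cliques with faces, and the reverse direction outputs honest facets of $L$, which are automatically cliques of $\Gamma$.
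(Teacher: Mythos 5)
You were right to isolate the reverse implication as ``the real content,'' but the combinatorial claim you reduce it to --- that a properly triangulated compact $(N-2)$--manifold with at least $N+1$ facets has two vertex-disjoint facets --- is false for $N\ge 4$, and so is the implication itself; no inclusion--exclusion over the cover by stars can close this gap. Take $N=4$ and let $L$ be the bipyramid over a triangle, i.e.\ the join $S^0\ast C_3$ of two points with the boundary of a triangle: a proper triangulation of $S^2$ with $5$ vertices and $6\ge N+1$ facets in which any two triangles share a vertex, and whose $1$--skeleton ($K_5$ minus an edge) cannot contain two disjoint copies of $K_3$ for lack of vertices. This $L$ occurs as a vertex link: in the triangulated $S^3$ given by $C_4\ast C_3$ (join of a $4$--cycle and a $3$--cycle), each vertex of the $C_4$ factor has link $S^0\ast C_3$. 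So here is a proper triangulation of a closed $3$--manifold in which every vertex link has at least $N+1=5$ facets but which does not have $4$--thick stars. The paper offers no argument to compare against --- the proposition is asserted as ``immediate'' --- and the surface case it generalizes is genuinely special: for a cycle, ``at least $4$ edges,'' ``at least $2(N-1)$ vertices,'' and ``contains two disjoint facets'' all coincide, and they diverge already in dimension two.

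Two further problems sit in your forward direction. The claim that links of vertices of a triangulated $(N-2)$--disc still have at least $N-1$ facets is false (a vertex of a triangulated disc can lie in a single facet), so your inequality $f\ge m$ can fail for bordered links; and in fact the forward implication itself fails at boundary vertices: for $N=3$, a boundary vertex whose link is a path with $3$ edges on $4$ vertices is $3$--thick (the two end edges give two triangles meeting only at $v$) although its link has only $3<N+1$ facets. What does survive is the ``only if'' half for closed manifolds, exactly by your degree count $f\ge m\ge 2(N-1)\ge N+1$ --- or, more directly, by observing that a closed link with at most $N$ facets must be $\partial\Delta^{N-1}$, whose $1$--skeleton $K_N$ has too few vertices to contain two disjoint copies of $K_{N-1}$.
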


Having $N$-thick stars forces the converse of Theorem~\ref{t:kob} to be true.

\begin{thm}\label{t:nthick}
Suppose $S$ is a surface with $\xi(S)=N$ and $\gam$ is a finite graph with $N$--thick stars. If $\aga\le \Mod(S)$, then $\gam\le\mC(S)$.
\end{thm}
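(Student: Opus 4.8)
The plan is to force each standard generator of $A(\Gamma)$ to map to a power of a single Dehn twist, and then to read off an embedding $\Gamma\hookrightarrow\mC(S)$ from the twisting curves. Write $N=\xi(S)$. The cases $N\le 1$ are degenerate and would be disposed of first: when $N=0$ the hypothesis of $N$--thick stars is satisfied only by the empty graph, and when $N=1$ (so $S$ is $S_{1,1}$ or $S_{0,4}$) the group $\Mod(S)$ is virtually free, hence $A(\Gamma)$ contains no $\mathbb{Z}^2$, hence $\Gamma$ has no edges, and an edgeless finite graph embeds into the infinite edgeless graph $\mC(S)$. So assume $N\ge 2$; then by Birman--Lubotzky--McCarthy the maximal rank of an abelian subgroup of $\Mod(S)$ equals $N$. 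After replacing each generator $v$ of $A(\Gamma)$ by a suitable power and restricting $\phi$ accordingly --- which is legitimate since the powers of the standard generators of a right-angled Artin group generate a right-angled Artin group on the same graph --- I may assume moreover that each $\phi(v)$ is a pure mapping class. This does not affect which pairs of generators commute, and for a clique $W$ in $\Gamma$ the subgroup $\langle\phi(w):w\in W\rangle$ remains free abelian of rank $|W|$, with $\langle\phi(w):w\in W_1\rangle\cap\langle\phi(w):w\in W_2\rangle=\langle\phi(w):w\in W_1\cap W_2\rangle$ because special subgroups of right-angled Artin groups intersect in the expected way.

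The heart of the argument is the following claim: if $g\in\Mod(S)$ lies in two distinct abelian subgroups $A_1,A_2$ of rank $N$ with $A_1\cap A_2=\langle g\rangle\cong\mathbb{Z}$, then $g$ is a nonzero power of a Dehn twist. I would argue as follows. Since $A(\Gamma)$ is torsion free, $g$ has infinite order, and it cannot be pseudo-Anosov on all of $S$, since the centralizer of such an element is virtually cyclic while $A_1$ has rank $N\ge 2$. Hence $g$ is reducible and its canonical reduction system $\sigma(g)$ is nonempty, and $\sigma(g)\subseteq\sigma(A_i)$ for $i=1,2$, where $\sigma(A_i)$ is the canonical reduction system of the group $A_i$. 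By the Birman--Lubotzky--McCarthy description, $A_i$ contains a nonzero power of the Dehn twist about every curve of $\sigma(A_i)$, and acts as a pseudo-Anosov on the complementary pieces supporting its pseudo-Anosov components. If $g$ had a pseudo-Anosov component on a complementary subsurface $R$, then $R$ would be proper (as $\sigma(A_i)\ne\emptyset$), so one could pick a curve $c$ in $\partial R\subseteq\sigma(g)\subseteq\sigma(A_1)\cap\sigma(A_2)$; then a nonzero power of $T_c$ lies in $A_1\cap A_2=\langle g\rangle$, so $g^k=T_c^n$ for some $k\ne 0$, which is absurd because $g^k$ still has a pseudo-Anosov component on $R$ whereas $T_c^n$ does not. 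Therefore $g=\prod_{c\in\sigma(g)}T_c^{k_c}$ with every $k_c\ne 0$; and if $\sigma(g)$ contained distinct curves $c_1,c_2$ the same reasoning would put nonzero powers $T_{c_1}^{a}=g^j$ and $T_{c_2}^{b}=g^{j'}$ inside $\langle g\rangle$, giving a relation $T_{c_1}^{aj'}=T_{c_2}^{bj}$ with nonzero exponents, which forces $c_1=c_2$. So $\sigma(g)=\{c\}$ and $g=T_c^k$.

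To finish, fix a vertex $v$; by hypothesis there are cliques $K_1,K_2$ of size $N$ with $K_1\cap K_2=\{v\}$, and $A_i:=\langle\phi(w):w\in K_i\rangle$ is free abelian of rank $N$, hence of maximal rank, with $A_1\cap A_2=\langle\phi(v)\rangle\cong\mathbb{Z}$, so $A_1\ne A_2$ and the claim applies: $\phi(v)=T_{c_v}^{k_v}$ with $k_v\ne 0$. The assignment $v\mapsto c_v$ is then an embedding of $\Gamma$ as an induced subgraph of $\mC(S)$. It is injective: if $c_v=c_w$ with $v\ne w$ then $\phi(v)$ and $\phi(w)$ lie in the infinite cyclic group $\langle T_{c_v}\rangle$ and so generate a cyclic group, whereas $\langle\phi(v),\phi(w)\rangle\cong\langle v,w\rangle$ is either $\mathbb{Z}^2$ or $F_2$ according as $v$ and $w$ are adjacent or not. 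And for $v\ne w$ we have $\{v,w\}\in E(\Gamma)$ if and only if $\phi(v)$ and $\phi(w)$ commute, if and only if $i(c_v,c_w)=0$, if and only if $\{c_v,c_w\}\in E(\mC(S))$. Hence $\Gamma\le\mC(S)$.

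I expect the main difficulty to lie in the central claim, specifically in the correct application of the structure theory of abelian subgroups of $\Mod(S)$: one needs that a maximal-rank abelian subgroup contains a nonzero power of the Dehn twist about each curve of its canonical reduction system, that $\sigma(g)\subseteq\sigma(A)$ whenever $g$ belongs to an abelian subgroup $A$, and that canonical reduction systems and pseudo-Anosov components persist under taking nonzero powers. The preliminary reduction to pure mapping classes, although standard, also requires care.
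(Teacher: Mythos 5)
Your argument is correct, but it takes a genuinely different route from the paper's. The paper first invokes Lemma~\ref{lem:standard} to replace the given embedding by a \emph{standard} one in which every generator is a multi-twist (this is where Koberda's theorem and the transvection trick do the work), and then runs a purely algebraic rank count: with $A$, $B$, $C$ the supports of $\phi\form{K}$, $\phi\form{L}$, $\phi\form{v}$, the multicurves $A\cup C$ and $B\cup C$ are maximal since $\xi(S)=N$, so $\phi\form{K,v}$ and $\phi\form{L,v}$ sit with finite index inside $\form{A\cup C}\cong\Z^N$ and $\form{B\cup C}\cong\Z^N$; hence $\form{C}$ is commensurable with a subgroup of $\phi\form{K,v}\cap\phi\form{L,v}=\phi\form{v}\cong\Z$, forcing $|C|=1$. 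You instead bypass the standardization entirely and argue directly with canonical reduction systems: a pure element lying in two maximal-rank abelian subgroups whose intersection is infinite cyclic must be a power of a single Dehn twist. Your route is more self-contained (it needs only Birman--Lubotzky--McCarthy and basic Thurston theory, not Lemma~\ref{lem:standard}), at the cost of a more delicate case analysis on $\sigma(g)$; the paper's route front-loads the difficulty into the standardization lemma and then finishes in three lines. Both proofs exploit the same geometric idea, namely that two maximal cliques meeting only at $v$ squeeze the support of $\phi(v)$ down to one curve.

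One step you correctly flag as the crux deserves an explicit justification: that a rank-$N$ abelian subgroup $A_i$ contains a nonzero power of $T_c$ for \emph{every} $c\in\sigma(A_i)$, including boundary curves of pseudo-Anosov pieces, where this is not immediate from the Birman--Lubotzky--McCarthy normal form. It does follow from maximality of the rank: $T_c$ centralizes $A_i$ since $c\in\sigma(A_i)$, so if no power of $T_c$ lay in $A_i$ then $\form{A_i,T_c}$ would be free abelian of rank $N+1>\xi(S)$, contradicting Theorem~\ref{t:BLM1983}. With that one-line argument inserted, your proof is complete.
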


The methods developed in this paper are ineffective in the case $\xi(S)=3$, and the authors do not know which conclusion of Theorems \ref{t:main1} or \ref{t:main2} holds in that case.  There are exactly three surfaces with $\xi(S)=3$, though there are only two different cases to consider among them (see Section \ref{s:intermediate}).

\begin{que}
Let $S$ be a surface of complexity $3$.  Do there exist subgroups $\aga<\Mod(S)$ such that $\gam\not\le\mC(S)$?
\end{que}

\subsection{Notes and references}
Aside from Theorem~\ref{t:kob}, the second main motivation for Theorems \ref{t:main1} and \ref{t:main2} is the following classical result of Birman, Lubotzky and McCarthy:

\begin{thm}[\cite{BLM1983}]\label{t:BLM1983}
Let $S$ be a surface and $A$ be a torsion-free abelian subgroup of $\Mod(S)$.
Then $A$ is isomorphic to a subgroup $\form{\phi_1,\ldots,\phi_k}<\Mod(S)$ where each $\phi_i$ is a Dehn twist or a pseudo-Anosov map on a connected subsurface
and $\supp\phi_i\cap \supp \phi_j=\varnothing$ for $i\ne j$.
\end{thm}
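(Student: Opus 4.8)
The plan is to combine the Nielsen--Thurston classification with the theory of canonical reduction systems. Since $A$ is torsion free, every nontrivial element of $A$ has infinite order, hence is either pseudo-Anosov or reducible and non-periodic. I would first dispose of the case where $A$ contains no infinite-order reducible element, so that every $1\ne f\in A$ is pseudo-Anosov on all of $S$. Fix such an $f$. As $A$ is abelian it lies in the centralizer $C_{\Mod(S)}(f)$, which is virtually infinite cyclic (McCarthy, Ivanov); a torsion-free abelian subgroup of a virtually cyclic group is cyclic or trivial, so $A=\langle\phi_1\rangle$ with $\phi_1$ pseudo-Anosov on the connected subsurface $S$ itself, and we are done with $k\le 1$.

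Now suppose $A$ contains some $1\ne f$ that is reducible of infinite order, and let $\sigma=\sigma(f)$ be its canonical reduction system, a nonempty multicurve. Canonical reduction systems are equivariant under conjugation and $A$ centralizes $f$, so $\sigma$ is $A$-invariant. Intersecting $A$ with the finite-index subgroup of $\Mod(S)$ fixing each component of $\sigma$ and each of its two sides, and with a fixed torsion-free, Ivanov-\emph{pure} finite-index subgroup of $\Mod(S)$, I obtain a finite-index subgroup $A'\le A$ which is pure and fixes $\sigma$ componentwise. It suffices to prove the conclusion for $A'$: once we know $A'$ is free abelian of finite rank, $A$ (a torsion-free abelian group with a finitely generated finite-index subgroup) is free abelian of the same rank, hence abstractly isomorphic to $A'$, and the statement only asks for \emph{some} subgroup of $\Mod(S)$ of the prescribed form isomorphic to $A$.

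Cutting along $\sigma$, write $S\setminus\sigma=\bigsqcup_i S_i$; there is the cutting homomorphism, which sits in a central extension
\[
1\longrightarrow \Z^{\sigma}\longrightarrow \stab(\sigma)\longrightarrow \prod_i\Mod(S_i)\longrightarrow 1,
\]
where $\Z^{\sigma}$ denotes the free abelian group generated by the Dehn twists about the components of $\sigma$. Restrict this to $A'$. Since $A'$ is pure and fixes $\sigma$ componentwise, each element of $A'$ restricts on each $S_i$ to the identity or a pseudo-Anosov, so the image of $A'$ in $\Mod(S_i)$ is a torsion-free abelian group all of whose nontrivial elements are pseudo-Anosov; by virtual cyclicity of centralizers of pseudo-Anosovs once more, this image is trivial or infinite cyclic, generated by a pseudo-Anosov $\psi_i$ on $S_i$. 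Let $I$ index the pieces with nontrivial image. Then $A'$ is an extension of a subgroup of $\prod_{i\in I}\langle\psi_i\rangle$ by a subgroup of $\Z^{\sigma}$, hence finitely generated, and being torsion-free abelian it is isomorphic to $\Z^k$ with $k\le|\sigma|+|I|$. Finally, the curves of $\sigma$ and the surfaces $S_i$ with $i\in I$ admit representatives with pairwise disjoint supports, so the Dehn twists $T_c$ ($c\in\sigma$) together with the $\psi_i$ ($i\in I$) generate a free abelian subgroup of $\Mod(S)$ of rank $|\sigma|+|I|$ whose generators have pairwise disjoint supports, each a Dehn twist or a pseudo-Anosov on a connected subsurface. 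Choosing any $k$ of these generators produces $\langle\phi_1,\ldots,\phi_k\rangle\cong\Z^k\cong A$ with $\supp\phi_i\cap\supp\phi_j=\varnothing$ for $i\ne j$; in passing, $A$ is finitely generated.

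The main obstacle is the reducible case: arranging a \emph{pure} finite-index subgroup for which cutting along $\sigma$ leaves, on every complementary piece, either nothing or a single pseudo-Anosov. This is where the canonical-reduction-system formalism and Ivanov's purity theorem do the real work, and one must check with care that $\sigma(f)$ is preserved by all of $A$ and that purity survives enlarging the reduction system to one preserved componentwise. A secondary point needing attention is bookkeeping with the cutting homomorphism---recording its kernel as honest disjoint Dehn twists, and confirming that the central extension contributes no extra rank---but once these are in hand the passage to a disjoint-support generating set is formal.
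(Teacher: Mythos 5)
First, note that the paper does not prove this statement at all: it is quoted as a classical theorem of Birman--Lubotzky--McCarthy and simply cited, so there is no in-paper proof to compare against. Your outline follows the standard (essentially the original) line of argument: Nielsen--Thurston classification, virtual cyclicity of centralizers of pseudo-Anosovs, canonical reduction systems, passage to a pure finite-index subgroup, and the cutting homomorphism. The skeleton is right, and the pseudo-Anosov case and the final bookkeeping are fine.

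There is, however, a genuine gap in the reducible case as you have written it. You cut along $\sigma=\sigma(f)$ for a \emph{single} reducible $f\in A$ and then assert that, because $A'$ is pure, every element of $A'$ restricts to each complementary piece $S_i$ as the identity or a pseudo-Anosov. Purity only guarantees this for the pieces of an element's \emph{own} canonical reduction system; an element $g\in A'$ with $\sigma(g)\not\subseteq\sigma(f)$ can perfectly well restrict to $S_i$ as a nontrivial reducible map. (Concretely: $A=\langle T_a,T_b\rangle$ for disjoint non-isotopic curves, $f=T_a$, $\sigma(f)=\{a\}$; the image of $T_b$ in $\Mod(S_i)$ is a Dehn twist, not a pseudo-Anosov, so your claim that the image of $A'$ in $\Mod(S_i)$ is generated by a pseudo-Anosov fails.) The missing ingredient is to reduce along the canonical reduction system of the whole group, $\sigma(A')=\bigcup_{g\in A'}\sigma(g)$, together with the lemma that canonical reduction systems of commuting pure mapping classes have zero geometric intersection, so that this union is an honest multicurve which is $A'$-invariant. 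With that multicurve in place of $\sigma(f)$, purity does give ``identity or pseudo-Anosov'' on every piece and the rest of your argument goes through. You flag this issue in your closing paragraph (``purity survives enlarging the reduction system''), but the proof as written never performs the enlargement, and the step it replaces is exactly where the argument would fail.
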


Let $K_n$ denote a complete graph on $n$ vertices.  In our language, Theorem \ref{t:BLM1983} implies that $A(K_n)\le \Mod(S)$ if and only if $K_n\le\mC(S)$.

The relationship between right-angled Artin groups and mapping class groups of surfaces has been studied by many authors from various perspectives (see \cite{CP2001}, \cite{CW2004}, \cite{CLM2012}, \cite{Koberda2012}, \cite{KK2013a} and the references therein, for instance).  Our perspective stems from the following theorem, which can be obtained by combining a result of the authors with a result of the second author (see \cite{Koberda2012} and \cite{KK2013} or \cite{KK2013a}):

\begin{thm}[See \cite{acyl}]\label{t:raagmcg}
Let $\gam$ be a finite graph and let $S$ be a surface.
\begin{enumerate}
\item
Let $i$ be an embedding of $\gam$ into $\mC(S)$ as an induced subgraph. 
Then for all sufficiently large $N$, the map \[i_{*,N}:\aga\to\Mod(S)\] given by sending $v$ to the $N^{th}$ power of a Dehn twist $T_{i(v)}^N$ is injective.
\item
If $\aga\le\Mod(S)$, then there is an embedding of $\gam$ into $\mC(S)_k$ as an induced subgraph.
\end{enumerate}
\end{thm}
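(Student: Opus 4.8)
The two halves require different machinery, so I would treat them separately. Part~(1) is essentially a restatement of Theorem~\ref{t:kob} that pins down the form of the embedding, and I would recall its proof from \cite{Koberda2012}; part~(2) is the substantive direction, resting on the Nielsen--Thurston classification, the structure theory of commuting mapping classes (Ivanov, together with Theorem~\ref{t:BLM1983} of Birman--Lubotzky--McCarthy), and the acylindrical hyperbolicity of $\Mod(S)$, as developed in \cite{KK2013, KK2013a, acyl}.

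For part~(1), fix the induced embedding $i\co\gam\hookrightarrow\mC(S)$ and put $f_v=T_{i(v)}^{N}$. If $\{v,w\}\in E(\gam)$ then $i(v),i(w)$ are disjoint, so $f_v$ and $f_w$ commute, and hence $i_{*,N}$ is a well-defined homomorphism from $\aga$; the content is injectivity for $N\gg 0$. I would prove it by ping-pong on the product $\prod_{v\in V(\gam)}\mathcal{C}(A_{i(v)})$ of annular curve graphs: for $N$ large, $f_v$ translates the factor $\mathcal{C}(A_{i(v)})$ by a long distance while, by the Behrstock inequality for subsurface projections, having bounded orbits on every factor $\mathcal{C}(A_{i(w)})$ with $w\ne v$. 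Partitioning the product into pieces indexed by the reduced words of $\aga$ --- arranged so that disjointness of curves collapses the expected factors and positive intersection keeps them separate --- one checks that any nontrivial reduced word in the $f_v$ moves a chosen basepoint. Alternatively, one may quote the projection-complex construction of Bestvina--Bromberg--Fujiwara: the annular projections of $\{i(v)\}$ form a projection system whose associated quasi-trees, after passing to high powers, carry a faithful action of $\aga$ of right-angled Artin type.

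For part~(2), suppose $\rho\co\aga\hookrightarrow\Mod(S)$. First I would arrange that each generator is \emph{pure}: replacing $\rho$ by $\rho\circ\phi_m$, where $\phi_m\co\aga\to\aga$ is the power endomorphism $v\mapsto v^m$ (still an embedding, see \cite{acyl}), we may assume each $\rho(v)$ lies in a fixed torsion-free pure finite-index subgroup of $\Mod(S)$, so that $\rho(v)$ has a canonical reduction system and acts on each complementary piece trivially, by a power of a Dehn twist, or by a pseudo-Anosov. Let $W_v$ be the \emph{active subsurface} of $\rho(v)$ --- the union of the annular neighborhoods of its twisting curves with the supports of its pseudo-Anosov pieces. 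The structure theory of commuting pure mapping classes gives a dichotomy: if $\{v,w\}\in E(\gam)$ then each component of $W_v$ is disjoint from, or equal to, a component of $W_w$; if $\{v,w\}\notin E(\gam)$ then $\langle\rho(v),\rho(w)\rangle\cong F_2$, and since pure mapping classes with disjoint supports commute, $W_v$ and $W_w$ must genuinely overlap. Injectivity of $\rho$ moreover excludes the degenerate coincidences --- e.g.\ two adjacent generators both pseudo-Anosov on a common connected piece, or both Dehn twists along a common curve --- since in either case the subgroup they generate would be virtually cyclic rather than $\bZ^2$. From this dichotomy I would select, for each $v$, a single essential connected subsurface $X_v$ (a carefully chosen component of $W_v$) so that $X_v$ and $X_w$ have disjoint representatives precisely when $\{v,w\}\in E(\gam)$; the family $\{X_v\}$ is then an induced copy of $\gam$ in the subsurface graph $\mC(S)_k$ of $S$, the index $k$ recording that only subsurfaces of complexity bounded in terms of $|V(\gam)|$ and $\xi(S)$ are needed.

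The main obstacle is the final selection step: converting the disjointness/overlap dichotomy into a globally consistent choice of one subsurface $X_v$ per vertex that realizes $\gam$ as an \emph{induced} subgraph. The trouble is that adjacent generators can have overlapping --- even nested --- active subsurfaces without those subsurfaces being simultaneously realizable by disjoint curves, which is exactly why the target must be the enlarged graph $\mC(S)_k$ rather than $\mC(S)$; ensuring that the chosen $X_v$ are pairwise \emph{non}-disjoint exactly when the corresponding generators fail to commute is the delicate combinatorial core of the argument, and it is precisely the discrepancy between $\mC(S)_k$ and $\mC(S)$ that Theorems~\ref{t:main1} and~\ref{t:main2} go on to quantify.
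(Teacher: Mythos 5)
First, note that the paper does not prove this theorem: it is imported verbatim from \cite{Koberda2012} and \cite{KK2013a}/\cite{acyl}, so the only ``in-house'' ingredient is Lemma~\ref{lem:standard}, which is exactly the reduction your part~(2) is missing. Your sketch of part~(1) is a reasonable outline of a known route -- not Koberda's original argument via Nielsen--Thurston theory and convergence of laminations, but the Clay--Leininger--Mangahas/Bestvina--Bromberg--Fujiwara approach via annular subsurface projections and the Behrstock inequality \cite{CLM2012} -- and at the level of a sketch it is acceptable, though ``partitioning the product into pieces indexed by the reduced words'' is where all the actual work (the large-links/ordering argument along a syllable decomposition) is hiding.

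Part~(2), however, has a genuine gap, in two places. First, you have misread the target: $\mC(S)_k$ is the \emph{clique graph} of $\mC(S)$ -- its vertices are multicurves, adjacent when component-wise parallel or disjoint -- not a ``subsurface graph'' with $k$ recording a complexity bound. Second, and more seriously, your final step proposes to choose for each vertex $v$ a \emph{single connected} subsurface $X_v$ so that disjointness of the $X_v$ reproduces $E(\gam)$ exactly. This cannot work in general: if the active subsurface $W_v$ has several components and two non-neighbors $a,b$ of $v$ overlap $W_v$ only in different components, no single component of $W_v$ overlaps both, so no choice of $X_v$ reflects non-adjacency correctly. You correctly flag this selection step as ``the delicate combinatorial core,'' but you do not resolve it, and as formulated it is false rather than merely delicate. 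The actual resolution -- the content of \cite{KK2013a} reproduced here as Lemma~\ref{lem:standard} -- is to avoid selecting at all: one first modifies the embedding (using Theorem~\ref{t:BLM1983}, powers, and transvections) so that \emph{every} generator maps to a multitwist whose support is not contained in the support of any other generator. Then $v\mapsto\supp f(v)$ lands directly in $\mC(S)_k$: condition~(ii) gives vertex-injectivity, commuting multitwists have supports forming a joint multicurve (adjacency in the clique graph), and non-commuting multitwists have intersecting supports. Your dichotomy for commuting pure mapping classes is correct as far as it goes, but without the multitwist reduction the passage from ``active subsurfaces'' to vertices of $\mC(S)_k$ does not close.
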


Observe that the first part of Theorem~\ref{t:raagmcg} is a more precise version of Theorem~\ref{t:kob}.  Here, the graph $\mC(S)_k$ denotes the \emph{clique graph} of $\mC(S)$.  From a topological perspective, $\mC(S)_k$ can be defined as the graph whose vertices are isotopy classes of essential, non--peripheral multicurves on $S$, and where two vertices are adjacent if the corresponding multicurves are component--wise parallel or disjoint.  Theorem \ref{t:main2} shows that $\mC(S)_k$ in Theorem \ref{t:raagmcg} cannot be replaced by $\mC(S)$ for a general surface.

In \cite{KK2013a}, in \cite{KK2013} and in \cite{acyl}, the authors develop an analogous theory of curve graphs for right-angled Artin groups.  In particular, a verbatim analogue of Theorem \ref{t:raagmcg} holds with $\Mod(S)$ replaced by a right-angled Artin group A($\Lambda$) and the curve graph $\mC(S)$ replaced by the extension graph $\Lambda^e$ of $\Lambda$.  
For many classes of graphs, it is known that $\aga\le A(\Lambda)$ if and only if $\gam\le\Lambda^e$, for instance for triangle--free graphs~\cite{KK2013} and for $C_4$ and $P_3$--free graphs (so--called \emph{thin--chordal graphs})~\cite{CDK2013}. 
However Casals--Ruiz, Duncan and Kazachkov proved that this is not always the case.

\begin{thm}{\cite{CDK2013}}\label{t:cdk}
There exist finite graphs $\gam$ and $\Lambda$ such that $\aga<A(\Lambda)$ and such that $\gam\not\le\Lambda^e$.
\end{thm}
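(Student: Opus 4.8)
The plan is to exhibit two explicit finite graphs $\gam$ and $\Lambda$ and then, separately, to establish (I) an embedding $\aga\hookrightarrow A(\Lambda)$ and (II) that $\gam$ is not isomorphic to any induced subgraph of $\Lambda^e$. The two parts use disjoint techniques, and essentially all the difficulty is in choosing a pair $(\gam,\Lambda)$ for which both can hold at once: one needs $\aga$ small enough to sit inside $A(\Lambda)$, yet $\gam$ complicated enough to escape $\Lambda^e$. One convenient restriction is to take $\Lambda$ square-free: since $F_2\times F_2\le A(\Lambda)$ would force an induced $4$--cycle in $\Lambda$, in that regime $\gam$ must itself be square-free while still failing to be an induced subgraph of $\Lambda^e$.

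For (I) the embedding must be of a kind that $\Lambda^e$ cannot detect, because $\Lambda^e$ records only the commutation pattern among conjugates of the \emph{individual} standard generators of $A(\Lambda)$. In particular the obvious constructions are useless here --- for instance one checks that the double $D_v\Lambda$ of $\Lambda$ along the closed star of a vertex already embeds into $\Lambda^e$ as an induced subgraph (two suitably conjugated copies of $\Lambda$ inside $\Lambda^e$ meet exactly along $\st(v)$), so doubling, even iterated, produces no counterexample. Instead I would send the vertex generators of $\aga$ to carefully chosen short \emph{words} of $A(\Lambda)$ --- products of standard generators and partial conjugates --- realizing precisely the commuting and non-commuting pairs prescribed by $E(\gam)$, and then certify injectivity by a standard device: a retraction of $A(\Lambda)$ onto a parabolic subgroup that separates the images, a normal-form or centralizer computation, or a ping-pong argument on the action of $A(\Lambda)$ on its Salvetti cube complex (or on $\Lambda^e$). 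Using words rather than single generators is exactly the extra flexibility that can make $\aga$ a subgroup of $A(\Lambda)$ with no copy of $\gam$ present in $\Lambda^e$.

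For (II) one needs a combinatorial property that $\Lambda^e$ inherits from $\Lambda$ but that $\gam$ lacks, and the subtlety is that the obvious invariants transfer: the clique number is unchanged, triangle-freeness is preserved, and $C_4$-freeness is preserved as well (a $4$--cycle in $\Lambda^e$ would give $F_2\times F_2\le A(\Lambda)$). So $\gam$ cannot be excluded by any of these gross obstructions, and one has to use a finer feature together with the recursive structure of $\Lambda^e$: it is an increasing union of copies of $\Lambda$ glued along vertex stars in a tree-like fashion, with $A(\Lambda)$ acting. A hypothetical induced copy of $\gam$ uses finitely many vertices, each a conjugate of a standard generator, and, being connected, it is carried into a controlled sub-configuration of this union; one then argues that $\gam$ contains a local pattern --- a ``forbidden link'' around some vertex, or an incompatible separation --- that cannot be assembled from stars of $\Lambda$ glued in this way, and disposes of the remaining finitely many gluing patterns by inspection.

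The main obstacle is step (II). Since $\Lambda^e$ is infinite one cannot enumerate its finite induced subgraphs, so the argument must either upgrade one of the preserved invariants to a genuine closure theorem for the operation $\Lambda\mapsto\Lambda^e$ --- for example weak chordality, or the statement that the link of each vertex of $\Lambda^e$ is a disjoint union of graphs each embedding into the link of some vertex of $\Lambda$ --- so that $\gam$ can be taken to violate it outright, or else carry out the localization above precisely enough to pin a would-be copy of $\gam$ down to finitely many configurations inside a bounded subgraph of $\Lambda^e$. Either way the real content is the joint choice of $(\gam,\Lambda)$: the tension between ``$\aga$ fits in $A(\Lambda)$'' and ``$\gam$ escapes $\Lambda^e$'' is exactly what has to be engineered.
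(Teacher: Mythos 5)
Your proposal is a strategy outline rather than a proof: you never name the graphs $\Gamma$ and $\Lambda$, and both of your steps (I) and (II) end with a menu of techniques that \emph{could} work rather than an argument that does. You yourself flag step (II) as the main obstacle and leave it unresolved, so as written there is a genuine gap --- the entire content of the theorem is the existence of a specific pair for which both halves can be completed simultaneously, and that pair is missing.

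That said, your overall shape matches what actually works, so it is worth recording how the gaps get closed. (The theorem as stated is cited from Casals--Ruiz--Duncan--Kazachkov, but the paper also produces its own independent example; see Remark~\ref{r:ext}.) The graphs are $\Gamma_0$ and $\Gamma_1$ of Figure~\ref{fig:gam}. For step (I), exactly as you predict, the embedding sends a generator to a word: $\phi\colon A(\Gamma_0)\to A(\Gamma_1)$ is $q\mapsto ef$ and the identity on the other vertices, and injectivity is proved by an innermost-pair cancellation argument combined with the retraction killing $f$ (Lemma~\ref{l:alaginj}) --- one of the ``standard devices'' on your list, but it has to be carried out for the specific pair. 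For step (II) the paper sidesteps the difficulty you correctly identify (one cannot enumerate the finite induced subgraphs of $\Lambda^e$, and the gross invariants such as clique number and $C_4$-freeness all transfer) by an indirect route you did not consider: $\Gamma_1$ embeds in $\mC(S)$ for a complexity-four surface $S$, hence $\Gamma_1^e\le\mC(S)$ by~\cite{KK2013a}; and $\Gamma_0\not\le\mC(S)$ is proved by surface topology --- an induced $4$-cycle of curves forces a decomposition of $S$ into subsurfaces $S_0,S_1,S_2$ admitting only finitely many configurations (Lemma~\ref{lem:c4}), and in each configuration the remaining vertices $q,g,h$ of $\Gamma_0$ cannot be realized as curves with the prescribed disjointness pattern (Lemma~\ref{lem:gam0}). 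So the ``finer feature plus localization'' you hoped for is replaced by embedding $\Lambda^e$ into a geometric object where the non-embedding of $\Gamma_0$ is certified by a finite case analysis. Without some such concrete mechanism, your step (II) does not go through.
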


Thus, Theorem \ref{t:main2} can be viewed as an analogue of Theorem \ref{t:cdk} for mapping class groups.  We note briefly that Theorem \ref{t:cdk} does not imply Theorem \ref{t:main2}, for even if a particular graph $\gam$ embeds in $\mC(S)_k$, the graph $\mC(S)_k$ is vastly more complicated that $\gex_k$. However, our example in Section~\ref{s:high comp} gives another example of $A(\Lambda)\le A(\Gamma)$ but $\Lambda\not\le\Gamma^e$; see the remark following Lemma~\ref{lem:gam0}.


The concept of $N$--thick stars used in Theorem \ref{t:nthick} is related to the well--studied graph--theoretic notion of a \emph{quasi--line} (see \cite{chudseymour}, for instance).  A graph is a quasi--line if the star of each vertex is the union of two complete graphs.

\section{Acknowledgements}
The authors thank M. Clay for asking the question which led to this article, and for useful conversations.  The authors also thank A. Lubotzky, C. McMullen, Y. Minsky and S. Oum for useful discussions.  The authors thank an anonymous referee for helpful comments and corrections.
The second named author is partially supported by NSF grant DMS-1203964.

\section{Proof of Theorem~\ref{t:main1}}
Let $S$ be a surface with punctures.
A mapping class $\phi\in\Mod(S)$ is called a \emph{multi-twist} if $\phi$ can be represented by 
a multiplication of powers of Dehn twists along disjoint pairwise-non-isotopic simple closed curves.
We call a regular neighborhood of the union of those simple closed curves as the \emph{support} of $\phi$.
The following is a refinement of a lemma in~\cite{KK2013a}, and the proof is similar.


\begin{lem}\label{lem:standard}
Let $X$ be a finite graph.
If $A(X)\le \Mod(S)$ then there exists an embedding $f\co A(X)\to \Mod(S)$ satisfying the following:
\begin{enumerate}[(i)]
\item
The map $f$ maps each vertex of $X$ to a multi-twist;
\item
For two distinct vertices $u$ and $v$ of $X$, the support of $f(u)$ is not contained in the support of $f(v)$.
\end{enumerate}
\end{lem}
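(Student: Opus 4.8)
The plan is to first produce an embedding sending each vertex to a multi-twist, essentially as in the corresponding lemma of \cite{KK2013a}, and then to upgrade it so that (ii) also holds, by combining a transvection automorphism of $A(X)$ with a minimality argument.

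For the first step, note that since $A(X)\le\Mod(S)$, Theorem~\ref{t:raagmcg}(2) provides an embedding of $X$ into the clique graph $\mC(S)_k$ as an induced subgraph, that is, a family $\{\mu_v : v\in V(X)\}$ of pairwise non-isotopic multicurves such that $\mu_u$ and $\mu_v$ can be realized disjointly precisely when $u$ and $v$ span an edge of $X$. Set $g_0(v)=T_{\mu_v}^M$, the $M$th power of the multi-twist along $\mu_v$. Since $[T_{\mu_u}^M,T_{\mu_v}^M]=1$ if and only if $\mu_u$ and $\mu_v$ are disjoint up to isotopy, the assignment $g_0$ extends to a well-defined homomorphism $A(X)\to\Mod(S)$, and arguing exactly as in \cite{KK2013a} (a ping-pong argument on $\mC(S)$ generalizing the one used in \cite{Koberda2012} for twists along single curves) it is injective once $M$ is sufficiently large. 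Fix such an $M$. As $g_0$ is then injective, distinct vertices of $X$ carry distinct multicurves, hence distinct supports, so any nesting $\supp g_0(u)\subseteq\supp g_0(v)$ with $u\neq v$ is automatically proper.

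For the refinement, let $\mathcal E$ be the (nonempty) set of all embeddings $g\co A(X)\to\Mod(S)$ of the form $g(v)=T_{\nu_v}^M$ with each $\nu_v$ a multicurve, and choose $f\in\mathcal E$ minimizing $\sum_{v\in V(X)}|\nu_v|$, the total number of components of the $\nu_v$. I claim $f$ satisfies (ii); since $f$ has the stated form, this also gives (i) and finishes the proof. Suppose instead that $\nu_u\subsetneq\nu_v$ for some distinct $u,v$. This forces a domination relation in $X$: if $w$ is adjacent to $v$ then $\nu_w$ is disjoint from $\nu_v$, hence from the sub-multicurve $\nu_u$, so $w$ is adjacent to $u$ because $f$ is an embedding; thus $\lk(v)\subseteq\st(u)$ in $X$. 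Consequently the endomorphism $\rho$ of $A(X)$ sending $v\mapsto vu^{-1}$ and fixing every other generator is well defined, and it is in fact an automorphism, with inverse $v\mapsto vu$. Then $f\circ\rho$ is again an embedding lying in $\mathcal E$: it fixes the images of all generators other than $v$, and its image of $v$ is $T_{\nu_v}^M T_{\nu_u}^{-M}=T_{\nu_v\setminus\nu_u}^M$, the twists along the common components $\nu_u$ having cancelled. Since $f\circ\rho$ has strictly smaller total number of components than $f$, this contradicts minimality. Hence $f$ has no nested supports, which is (ii).

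The main obstacle is the injectivity in the first step: showing that replacing the a priori complicated generators of $A(X)$ by high powers of multi-twists along the multicurves extracted from $\mC(S)_k$ introduces no new relations. This is precisely the content of the corresponding lemma of \cite{KK2013a} and rests on a ping-pong/Bestvina--Handel-type analysis on the curve graph. Everything afterwards, including the new refinement (ii), is formal; the only delicate point is that removing one nesting at a time could in principle create others, and this is exactly what the minimality argument above circumvents.
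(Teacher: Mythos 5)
The second half of your argument --- minimizing $\sum_v|\nu_v|$ and using a transvection $v\mapsto vu^{-1}$ to strip off a nested support --- is essentially the paper's own argument and is fine. The gap is in the first step. From Theorem~\ref{t:raagmcg}(2) you extract an induced copy $v\mapsto\mu_v$ of $X$ in $\mC(S)_k$ and then assert that $v\mapsto T_{\mu_v}^M$ is injective for $M\gg0$ ``by ping-pong as in \cite{KK2013a}.'' No cited result gives this: the ping-pong theorems of \cite{Koberda2012} and \cite{CLM2012} concern powers of twists along \emph{single} curves (or pseudo-Anosovs on connected subsurfaces), not multi-twists along overlapping multicurves, and the combinatorial hypothesis $X\le\mC(S)_k$ is genuinely insufficient. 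Concretely, let $a,b,c,d,e$ be the standard pentagon of curves realizing $C_5\le\mC(S_{0,5})$ and let $\mu_1=\{a,b\},\mu_2=\{b,c\},\ldots,\mu_5=\{e,a\}$ be its five edges. Any two of these multicurves contain an intersecting pair of components, so they span a totally disconnected induced subgraph $X$ of $\mC(S_{0,5})_k$ and $A(X)=F_5$; yet for every $M$ the elements $x_i=T_{\mu_i}^M$ satisfy the cyclically reduced relation $x_1^{-1}x_2x_3^{-1}x_4x_5^{-1}x_1x_2^{-1}x_3x_4^{-1}x_5=1$, since $x_1^{-1}x_2=T_a^{-M}T_c^{M}$, $x_3^{-1}x_4=T_c^{-M}T_e^{M}$, etc., using only the commutations in $C_5$, and the resulting product telescopes to the identity. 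So an induced copy of $X$ in $\mC(S)_k$ does not by itself yield an injective multi-twist representation, no matter how large $M$ is; this is exactly why the special cases of such ``clique-map'' injectivity that the paper does need (Lemma~\ref{l:alaginj}) require a bespoke cancellation-diagram proof, and why Theorem~\ref{t:raagmcg} is not stated as an equivalence with $\mC(S)_k$.

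The paper avoids this issue by never discarding the given embedding: it writes each $f_0(v)$ in Thurston canonical form, extracts the finitely many primitive classes $\psi_j$, applies Koberda's theorem to those to get an injective $g_0\co A(Y)\to\Mod(S)$ with $f_0=g_0\circ h$ after raising powers, so that injectivity of $h$ is \emph{inherited} from $f_0$; it then replaces each $\psi_j$ by a twist along a curve $\gamma_j\subseteq\supp\psi_j$ with the same co-intersection pattern and takes $g\circ h$. To repair your outline you would need to carry the original embedding along in this way rather than rebuild the representation from the clique-graph data alone.
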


\begin{proof}
Let $f_0$ be an embedding of $A(X)$ into $\Mod(S)$.
By raising the generators to powers if necessary, we may assume that the image of each vertex $v$ is written as $\phi_1^v \phi_2^v\cdots \phi_{n_v}^v$
where each $\phi_i^v$ is either a Dehn twist or a pseudo-Anosov on a connected subsurface
and $\phi_i^v$'s have disjoint supports.
Choose a minimal collection $\{\psi_1,\ldots,\psi_m\}\subseteq\Mod(S)$ such that for every $i$ and $v$,
the mapping class $\phi_i^v$ is a power of some $\psi_j$.
By~\cite{Koberda2012} (see also~\cite{CLM2012}),
there exists $N>0$ and a graph $Y$ with $V(Y)=\{v_1,\ldots,v_m\}$
such that the map $g_0\co A(Y)\to \Mod(S)$ defined by $g(v_j) = \psi_j^N$ 
is an embedding.
Moreover,
we can find simple closed curves $\gamma_1,\ldots\gamma_m$ such that 
$\gamma_i\subseteq \supp\psi_i$ 
and $\supp \psi_i\cap\supp\psi_j=\varnothing$ if and only if $\gamma_i\cap \gamma_j=\varnothing$
for every $i$ and $j$.
By raising $N$ further if necessary, we have an embedding $g\co A(Y)\to \Mod(S)$ defined by 
$v_j\mapsto T_{\gamma_j}^N$.
We may assume that $f_0 = g_0\circ h$ for some $h\co A(X)\to A(Y)$,
by further raising the image of each $f(v)$ for $v\in X$ to some power.
Then $g\circ h$ is an embedding from $A(X)$ to $\Mod(S)$ such that each vertex maps to a multi-twist.
Note that if $u$ and $v$ are adjacent vertices in $X$ then the multi-curves corresponding to $f(u)$ and $f(v)$ also form a multi-curve.

Now among the embeddings $f\co A(X)\to\Mod(S)$ that map each vertex to a multi-twist,
we choose $f$ so that \[\sum_v \#\supp f(v)\] is minimal.
Here, $\#$ of a multi-curve denotes the number of components.
Suppose that $\supp f(v)\subseteq \supp f(w)$.
Note that $[v,w]=1$ and $\lk_X(w)\subseteq \st_X(v)$.
Then we have a \emph{transvection}  (see \cite{Servatius1989})  automorphism $\tau\co A(X)\to A(X)$ defined by $\tau(w)=v^P w^Q$
for some $P,Q\ne0$ such that 
$\#\supp f(v^P w^Q)<\#\supp f(v)$, a fact which results more or less from the Euclidean algorithm.  The reader may consult \cite{KK2013a} for more details. This is a contradiction of the minimality of $\sum_v \#\supp f(v)$.
\end{proof}

\begin{rem}\label{rem:standard}
In the above lemma, if $\supp f(v)$ is a maximal clique in $\mC(S)$ then the condition (ii) implies that $v$ is an isolated vertex.
\end{rem}

\begin{defn}\label{defn:standard}
An embedding of a right-angled Artin group into a mapping class group is called \emph{standard}
if conditions (i) and (ii) in Lemma~\ref{lem:standard} are satisfied.
\end{defn}

The lowest complexity surfaces with nontrivial mapping class groups are $S_{0,4}$ and $S_{1,1}$ (so that $\xi(S)=1$).  Both of these surfaces admit simple closed curves, but neither admits a pair of disjoint isotopy classes of simple closed curves.  Because of this fact, most authors define edges in $\mC(S)$ to lie between curves with minimal intersection (two or one intersection point, respectively).  This definition is not suitable for our purposes and we will keep the standard definition of curve graphs, so that $\mC(S)$ is an infinite union of isolated vertices in both of these cases.  

\begin{proof}[Proof of Theorem~\ref{t:main1}]
In view of Theorem \ref{t:raagmcg}, we only need to show the ``only if" direction.  
The case $\xi(S)=1$ is obvious, since $\mC(S)$ is discrete and $\Mod(S)$ is virtually free.
Now let us assume $\xi(S)=2$, so that $S=S_{1,2}$ or $S=S_{0,5}$.  
We note that $\mC(S)$ contains no triangles.  

The conclusion of the proposition holds for $\gam$ if and only if it holds for each component of $\gam$. This is an easy consequence of the fact that $\mC(S)$ has infinite diameter and that a pseudo-Anosov mapping class on $S$ exists. So, we may suppose that $\gam$ is connected and contains at least one edge.
By Lemma~\ref{lem:standard}, we can further assume to have a standard embedding $f\co A(\gam)\to \Mod(S)$.
Since $\gam$ has no isolated vertices and $\mC(S)$ is triangle--free,
the remark following Lemma~\ref{lem:standard} implies that
each vertex maps to a power of a single Dehn twist.
This gives a desired embedding $\gam\to\mC(S)$.
\end{proof}

\section{High complexity surfaces}\label{s:high comp}
The strategy for dealing with high complexity surfaces (surfaces $S$ for which $\xi(S)>3$) is to build an example which works for surfaces with $\xi(S)=4$ and then bootstrapping to obtain examples in all higher complexities.  In particular, we will take the three surfaces with $\xi(S)=4$ and build graphs $\gam_0$ and $\gam_1$ such that $A(\gam_0)<A(\gam_1)<\Mod(S)$ but such that $\gam_0\nleq\mC(S)$.  We will then use $\gam_0$ and $\gam_1$ to build corresponding graphs for surfaces of complexity greater than four.

The source of our examples in this section will be the graphs $\gam_0$ and $\gam_1$ shown in Figure~\ref{fig:gam}. Observe that the graph $\Gamma_0$ is obtained from the graph $\gam_1$ by collapsing $e$ and $f$ to a single vertex $q$ and retaining all common adjacency relations.  We will denote by $C_4$ the 4-cycle spanned by $\{a,b,c,d\}$.

\begin{figure}[htb!]
\subfloat[(a) $\Gamma_0$]{\includegraphics[width=.3\textwidth]{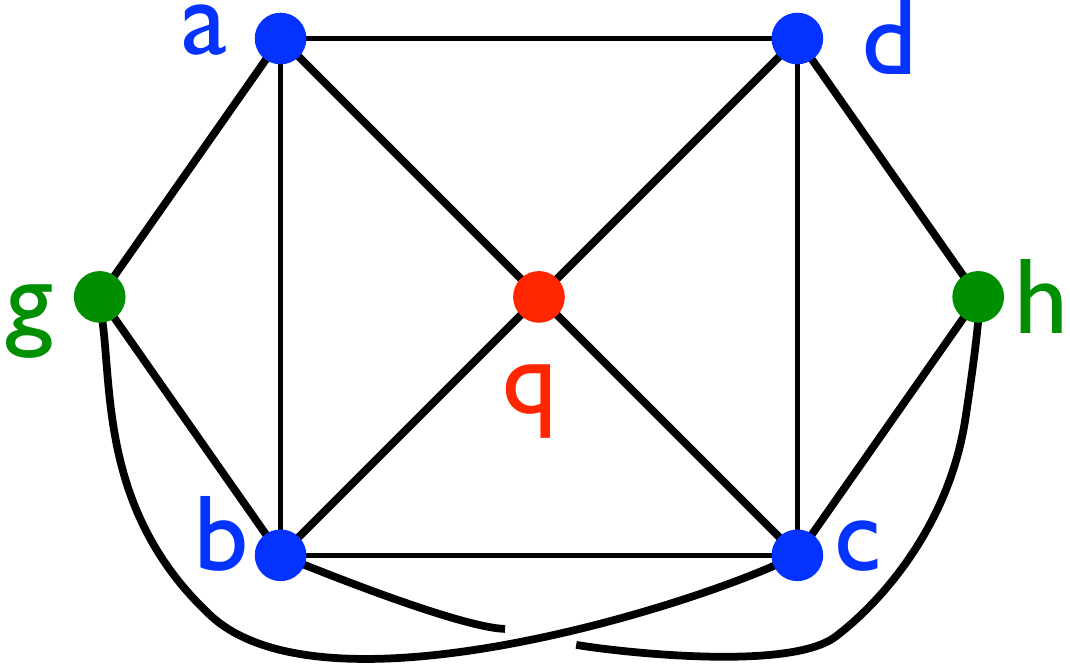}}
$\qquad$$\qquad$
\subfloat[(b) $\Gamma_1$]{\includegraphics[width=.3\textwidth]{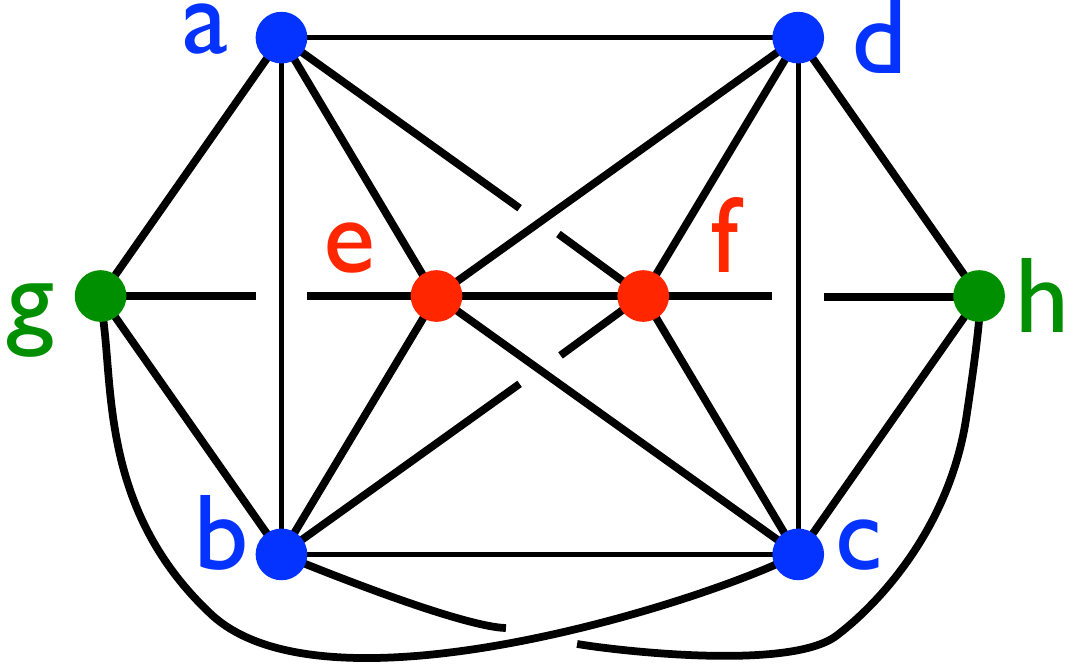}}
  \caption{Two graphs $\Gamma_0$ and $\Gamma_1$.}
  \label{fig:gam}
\end{figure}


\subsection{An algebraic lemma}
Let us consider the map $\phi:A(\gam_0)\to A(\gam_1)$ defined by $\phi:q\mapsto ef$ and which is the identity on the remaining vertices. 

\begin{lem}\label{l:alaginj}
The map $\phi:A(\gam_0)\to A(\gam_1)$ is injective.
\end{lem}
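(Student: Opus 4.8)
The plan is to show directly that $\phi$ kills no nontrivial reduced word of $A(\gam_0)$, working with the combinatorial solution of the word problem in a right-angled Artin group: reduced words are unique up to repeatedly transposing adjacent commuting generators, and a word fails to be reduced exactly when it contains two occurrences $s^{\epsilon},\ s^{-\epsilon}$ of a single vertex generator $s$ with every letter strictly between them lying in $\langle\lk(s)\rangle$ (see \cite{Servatius1989}, and the ``piling'' normal form of \cite{KK2013a} for a convenient bookkeeping device for this). The only features of the two graphs that I will use, all visible in Figure~\ref{fig:gam}, are that $e$ and $f$ are non-adjacent in $\gam_1$, that the full subgraphs of $\gam_0$ and of $\gam_1$ on the vertices other than $q$, respectively other than $\{e,f\}$, coincide, and that $\lk_{\gam_0}(q)=\lk_{\gam_1}(e)\cap\lk_{\gam_1}(f)$. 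From these, together with Servatius' centralizer theorem, one checks that $ef$ has infinite order in $A(\gam_1)$ and that an ``old'' generator $v$ (a vertex other than $e,f$) commutes with $ef$ if and only if $v\in\lk_{\gam_0}(q)$; in particular $\phi$ is a well-defined homomorphism.

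So let $w$ be a reduced word in the generators of $A(\gam_0)$ with $w\ne 1$, and suppose toward a contradiction that $\phi(w)=1$ in $A(\gam_1)$. The word $\phi(w)$ is obtained from $w$ by retaining each old letter and replacing each $q$ by $ef$ and each $q^{-1}$ by $f^{-1}e^{-1}$; call the new letters the $e$- and $f$-letters, noting that they are in bijection with the $q$-letters of $w$. Since $\phi(w)$ represents the identity and is nonempty, it is not reduced, so it has a cancelling pair $s^{\epsilon},\ s^{-\epsilon}$. If $s$ is an old generator, then every $q$-letter of $w$ lying between this pair contributes a factor $ef$ or $f^{-1}e^{-1}$ which must lie in $\langle\lk_{\gam_1}(s)\rangle$, forcing $s$ to be adjacent in $\gam_1$ to both $e$ and $f$, i.e.\ $s\in\lk_{\gam_0}(q)$; since the old letters between the pair already lie in $\lk_{\gam_0}(s)$, every letter of $w$ between this $s^{\epsilon}$ and $s^{-\epsilon}$ lies in $\langle\lk_{\gam_0}(s)\rangle$, contradicting that $w$ is reduced. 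If $s=f$ (the case $s=e$ is symmetric), then no $e$-letter --- hence no $q$-letter of $w$ --- can lie between the two $f$-letters, since $e\notin\lk_{\gam_1}(f)$; inspecting the orders within $\phi(q)=ef$ and $\phi(q^{-1})=f^{-1}e^{-1}$ shows that the two $f$-letters must come from a $q$ and a subsequent $q^{-1}$ of $w$ enclosing a subword $u$ on old generators, every letter of $u$ lying in $\lk_{\gam_1}(f)$. Deleting the $f$-pair brings together the two $e$-letters of those two blocks, now separated only by $u$; in order for these to cancel in $A(\gam_1)$, every letter of $u$ must in addition lie in $\lk_{\gam_1}(e)$, hence in $\lk_{\gam_1}(e)\cap\lk_{\gam_1}(f)=\lk_{\gam_0}(q)$; but then $q\,u\,q^{-1}$ was a subword of the reduced word $w$ with $u\in\langle\lk_{\gam_0}(q)\rangle$ --- again a contradiction.

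The delicate point, which I expect to absorb most of the work, is the clause ``deleting the $f$-pair makes the two $e$-letters cancel'': the reductions witnessing $\phi(w)=1$ need not respect the blocks $\phi(q)=ef$, so an $e$-letter of one block could a priori cancel against the $e^{-1}$-letter of a different block while the two orphaned $f$-letters are matched elsewhere, and one must verify that every such configuration still forces a forbidden subword of $w$. The clean way to do this seems to be to run the whole analysis on the piling normal form of $\phi(w)$ in the sense of \cite{KK2013a}, where ``which letter cancels which'' is determined canonically, so that the case distinction above becomes a finite inspection of the piling rather than a claim about an arbitrary reduction sequence; equivalently, one tracks how the $e$- and the $f$-letters must pair up as $\phi(w)$ collapses. (An alternative is the amalgam decomposition $A(\gam_0)=A(\gam_0\setminus q)*_{A(\lk_{\gam_0}(q))}\bigl(A(\lk_{\gam_0}(q))\times\langle q\rangle\bigr)$ mapped into $A(\gam_1)$ by $q\mapsto ef$: both vertex groups embed and their images meet in $A(\lk_{\gam_0}(q))$, but concluding that the amalgam itself embeds would still require producing a suitable action of $A(\gam_1)$ on a tree, so the combinatorial route above is the more direct one.)
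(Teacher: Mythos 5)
Your first case (a cancelling pair of an ``old'' generator $s$) is correct and complete: a $q$-letter of $w$ trapped between the pair forces $e,f\in\lk_{\gam_1}(s)$, hence $s\in\lk_{\gam_0}(q)$, and reducedness of $w$ is violated. The problem is the second case, and you have put your finger on it yourself: after locating the subword $q\,u\,q^{-1}$ of $w$ with every letter of $u$ in $\lk_{\gam_1}(f)=\{a,b,c,d,h\}$, nothing you have written forces the two $e$-letters of those two blocks to cancel against \emph{each other}. Indeed, if $u$ contains an $h^{\pm1}$, then since $h\notin\st_{\gam_1}(e)$ and $u$ is reduced (being a subword of the reduced word $w$), the element $fuf^{-1}=u$ has $h$ in its support, so that $e$-pair \emph{cannot} cancel against each other; the $e$ must be matched with an $e^{-1}$ from some other $q^{-1}$-block. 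So the sentence ``in order for these to cancel, every letter of $u$ must lie in $\lk_{\gam_1}(e)$'' assumes exactly the conclusion you need ($h\notin\supp(u)$), and the promised ``finite inspection of the piling'' that would rule out the cross-block matchings is not carried out. The appeal to canonicity of the cancellation pairing does not rescue this: the pairing of letters in a word representing $1$ in a RAAG is not unique (already $ss^{-1}ss^{-1}$ admits two pairings), and what one actually needs is a genuinely global argument tracking all the $e$- and $f$-letters simultaneously. As written, the proof is incomplete at its crux.

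For comparison, the paper sidesteps this pairing ambiguity entirely by exploiting the asymmetry of the graph at the vertex $g$ rather than at $e$ or $f$. It first shows that $\phi$ restricted to $\form{C_4,q,h}$ is injective, using the retraction $p\co\form{C_4,e,f,h}\to\form{C_4,e,h}$ killing $f$; consequently a nontrivial reduced $w\in\ker\phi$ must contain $g$. Taking an innermost $\{g,g^{-1}\}$-pair gives a subword $g^{\pm1}w_1g^{\mp1}$ of $w$ with $\phi(w_1)\in\form{\lk_{\gam_1}(g)}=\form{a,b,c,e}$ \emph{as a group element} (no letter-by-letter bookkeeping needed), and then a short exponent-sum argument in $e$ and $f$ --- every element of $\phi\form{C_4,q,h}$ has equal $e$- and $f$-exponent sums, while membership in $\form{a,b,c,e}$ forces the $f$-sum to vanish --- pins $\phi(w_1)$ down to $\form{a,b,c}\subset\form{\lk_{\gam_0}(g)}$, contradicting reducedness. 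If you want to salvage your route, you would need an analogous global invariant (or an induction on the number of $q$-letters) to exclude the cross-block cancellations; the exponent-sum trick applied inside $\st(g)$ is precisely such a device.
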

\begin{proof}
We first claim that the restriction $\psi\co \form{C_4,q,h}\to\form{C_4,ef,h}$ of $\phi$ is an isomorphism.
To see this, consider the projection $p\co \form{C_4,e,f,h}\to\form{C_4,e,h}$ defined by $p(f)=1$.
The  claim follows from that $p\circ\psi$ is an isomorphism.


Now suppose $w$ is a reduced word in $\ker\phi\setminus\{1\}$.
By choosing an innermost $\{g,g^{-1}\}$--pair in the cancellation diagram (see~\cite{CW2004,Kim2008}) of the word $\phi(w)$, we may write $w = w_0 g^{\pm1} w_1 g^{\mp1} w_2$
such that $w_1\in\form{C_4,q,h}\cap\phi^{-1}\form{a,b,c,e}$.  Indeed, since the $\{g,g^{-1}\}$--pair cancels, we must have that $\supp(\phi(w_1))\subset\st(g)$.
Then we have 
\[\phi(w_1)\in\phi\form{C_4,q,h}\cap\form{a,b,c,e}=\form{C_4,ef,h}\cap(\form{a,c}\times\form{b}\times\form{e}).\]
Since $\phi(w_1)\in\form{a,b,c,e}$, the exponent sum of $f$ in $\phi(w_1)$ is zero.
From $\phi(w_1)\in\form{C_4,ef,h}$, it follows that the exponent sum of $e$ in $\phi(w_1)$ is also zero.
Since $\form{e}$ is a direct factor of $\form{a,b,c,e}$, we see $\phi(w_1)\in\form{a,b,c}$. Combined with the claim in the first paragraph, we have
\[
w_1\in\phi^{-1}\form{a,b,c}\cap\form{C_4,q,h}=\psi^{-1}\form{a,b,c}=\form{a,b,c}.\]
This contradicts the fact that $w$ is reduced.
\end{proof}

\subsection{The case $\xi(S)=4$}
Let $S$ be a connected surface with complexity four.
This means  $S$ is one of $S_{0,7}$, $S_{1,4}$ and  $S_{2,1}$.

\begin{lem}\label{l:gamma0embed}
The graph $\gam_1$ embeds into $\mC(S)$ as an induced subgraph.
\end{lem}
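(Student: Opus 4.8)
The plan is to realize $\Gamma_1$ as an induced subgraph of $\mC(S)$ by exhibiting, on each of the three surfaces $S_{0,7}$, $S_{1,4}$ and $S_{2,1}$, an explicit collection of eight essential nonperipheral simple closed curves $\{\gamma_v\co v\in V(\Gamma_1)\}$ (cf.\ Figure~\ref{fig:gam}) that are pairwise non-isotopic and satisfy: $\gamma_u$ and $\gamma_v$ admit disjoint representatives if and only if $u$ and $v$ are adjacent in $\Gamma_1$. Since $\xi(S)=4$ for each of these surfaces and $\Gamma_1$ contains a $K_4$ but no $K_5$, a maximal clique of $\mC(S)$ has exactly as many vertices as a maximal clique of $\Gamma_1$; in particular $\Gamma_1\not\le\mC(S)$ whenever $\xi(S)\le 3$, so $\xi(S)=4$ is the borderline case and the construction has essentially no room to spare.

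For $S=S_{0,7}$ I would use the standard combinatorial description of $\mC(S_{0,n})$: a vertex is a partition of the $n$ punctures into two blocks each of size at least two, and two vertices span an edge exactly when the two partitions are laminar (one block of the first is contained in a block of the second, equivalently one of the four pairwise intersections of blocks is empty). The problem then reduces to writing down eight partitions of $\{1,\dots,7\}$, pairwise distinct, with the prescribed pattern of laminarity, and checking the $\binom{8}{2}=28$ pairs, a finite verification. It is convenient to arrange several of the $\gamma_v$ so that the curves they must be disjoint from form a fixed multicurve, placing those $\gamma_v$ inside its complement; some care is needed in the choice, since an inattentive placement of the four-cycle $C_4$ leaves no essential curve available for the remaining vertices. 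For $S=S_{1,4}$ and $S=S_{2,1}$ I would instead draw the eight curves directly on a planar model of the surface, one figure for each, laying down $C_4$ first and then adding the remaining vertices in the complementary room, and read off the adjacencies from the picture; the non-adjacencies are then confirmed by the bigon criterion (the drawn representatives bound no bigons, hence realize the geometric intersection numbers), equivalently by the change-of-coordinates principle applied to the relevant local configurations.

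The step I expect to be the real obstacle is not producing disjoint curves but certifying the non-edges: one must verify that each prescribed non-adjacent pair genuinely has positive geometric intersection number (so the two curves cannot be made disjoint), that none of the eight curves is inessential or peripheral, and that no two of them are accidentally isotopic. All of this is automatic once the partition model is invoked on $S_{0,7}$, but it must be checked by hand in the figures for $S_{1,4}$ and $S_{2,1}$. Once Lemma~\ref{l:gamma0embed} is established, Theorem~\ref{t:kob} yields $A(\Gamma_1)\le\Mod(S)$, and precomposing the resulting embedding with the injection $\phi\co A(\Gamma_0)\to A(\Gamma_1)$ of Lemma~\ref{l:alaginj} gives $A(\Gamma_0)\le\Mod(S)$, which is the goal of the construction.
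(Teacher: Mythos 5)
Your overall strategy --- exhibit eight explicit curves on each of $S_{0,7}$, $S_{1,4}$ and $S_{2,1}$ and check adjacencies and non-adjacencies directly, using the bigon criterion to certify positive geometric intersection numbers for the non-edges --- is exactly what the paper does: its proof consists of a figure for each of the three surfaces together with the observation that all pairwise intersection numbers in the drawn configurations are $0$, $1$ or $2$. For $S_{1,4}$ and $S_{2,1}$ your plan is sound, modulo actually producing the pictures, which is where all the content lies.

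The $S_{0,7}$ case, however, rests on a false premise. There is no ``standard combinatorial description'' of $\mC(S_{0,n})$ as the graph of partitions of the punctures with edges given by laminarity: the curve graph of $S_{0,7}$ is an infinite graph (of infinite diameter), whereas the partition graph is finite, and infinitely many pairwise non-isotopic curves induce the same partition of the punctures. Only one implication survives: disjoint non-isotopic curves induce laminar partitions, so \emph{crossing} partitions certify a non-edge (and distinct partitions certify non-isotopy, and blocks of size at least two certify essentiality). But laminarity of partitions does not certify an edge --- two specific curves inducing laminar partitions can perfectly well intersect essentially --- so your ``finite verification of the $28$ pairs'' establishes nothing about the pairs that are supposed to be adjacent, and the claim that everything ``is automatic once the partition model is invoked'' is wrong. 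Simultaneously realizing all the prescribed disjointnesses by actual curves is precisely the content of the lemma, and for $S_{0,7}$ you still have to draw the configuration (or give an equivalent explicit construction), exactly as in the other two cases. The partition data is a convenient bookkeeping device for the non-edges; it cannot replace the construction.
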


\begin{proof}
The corresponding surfaces are shown in Figure~\ref{fig:xi4}. In (a) and (c), the curves for the vertices $a,c,e$ and $h$ are given by the mirror images of those for $b,d,f$ and $g$, respectively. 
One can verify that the curves with the shown configuration have the minimal intersections by observing that the intersection numbers are either 0,1 or 2.
\end{proof}

\begin{figure}[htb!]
\subfloat[(a) $S_{0,7}$]{\includegraphics[width=.27\textwidth]{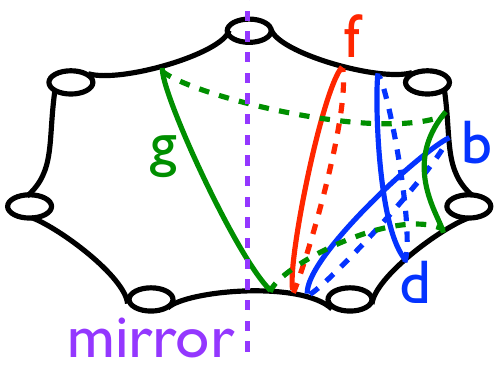}}
$\qquad$
\subfloat[(b) $S_{1,4}$]{\includegraphics[width=.3\textwidth]{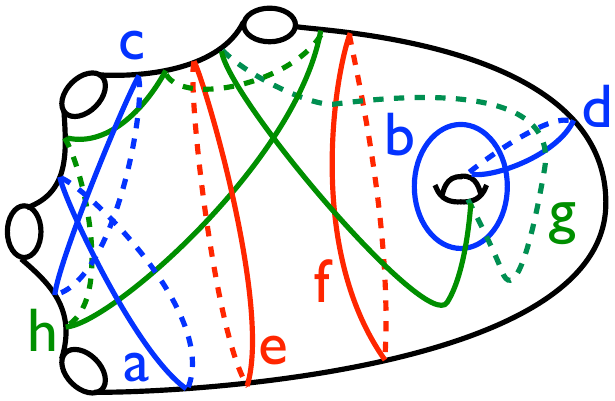}}
$\qquad$
\subfloat[(c) $S_{2,1}$]{\includegraphics[width=.21\textwidth]{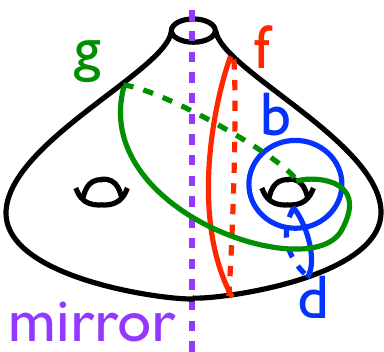}}
  \caption{Complexity four surfaces.}
  \label{fig:xi4}
\end{figure}

Now suppose $\{a,b,c,d\}$ are simple closed curves on $S$ (here we still have $\xi(S)=4$) which form a four-cycle in $\mC(S)$ with this cyclic order.
Let $S_1$ be a closed regular neighborhood of the curves $a$ and $c$ along with disks glued to null-homotopic boundary components.
Similarly we define $S_2$ for $b$ and $d$ so that $S_1\cap S_2=\varnothing$. Define $S_0$ as the closure of $S\setminus (S_1\cup S_2)$.  
By isotopically enlarging $S_1$ and $S_2$ if necessary, we may assume that whenever $A$ is an annulus component of $S_0$ then both components of $\partial A$ intersect $S_1\cup S_2$
(i.e. no component of $S_0$ is a punctured disk).
Note that $\xi(S_1),\xi(S_2)\ge1$, since they both contain a pair of non--isotopic simple closed curves. Since $S$ is connected and at least one component of $S_0$ intersects each of $S_1$ and $S_2$, we have that $S_0$ has at least two boundary components.

\begin{lem}\label{lem:c4}
The triple $(S_0,S_1,S_2)$ satisfies exactly one of the following conditions, possibly after switching the roles of $S_1$ and $S_2$.
\begin{enumerate}[(i)]
\item
$S_1\in\{S_{1,2},S_{0,5}\}, S_2\in\{S_{1,1},S_{0,4}\}$, $S_0\approx S_{0,2}$,
and $S_0$ intersects both $S_1$ and $S_2$.
\item
$S_1,S_2\in\{S_{0,4},S_{1,1}\}$, $S_0\approx S_{0,3}$,
and $S_0$ intersects each of $S_1$ and $S_2$ at only one boundary component.
\item
$S_1,S_2\in\{S_{0,4}\}$, $S_0\approx S_{0,2}\coprod S_{0,2}$,
and each component of $S_0$ intersects both $S_1$ and $S_2$.
\item
$(S_1,S_2)\in\{(S_{0,4},S_{0,4}),(S_{0,4},S_{1,1})\}$, $S_0\approx S_{0,2}\coprod S_{0,2}$,
and one component of $S_0$ intersects each of $S_1$ and $S_2$ at only one boundary component,
while the other component of $S_0$ intersects $S_1$ at two boundary components.
\item
$(S_1,S_2)\in\{(S_{0,4},S_{0,4}),(S_{0,4},S_{1,1})\}$ and $S_0\approx S_{0,2}\coprod S_{0,3}$
such that
the $S_{0,2}$ component intersects both $S_1$ and $S_2$
and the $S_{0,3}$ component is disjoint from $S_2$, and moreover, $S_{0,3}\cap S_1\approx S^1$.
\end{enumerate}
\end{lem}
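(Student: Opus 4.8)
The plan is to carry out an Euler characteristic and subsurface-counting argument. We have the surface $S$ with $\xi(S)=4$, and $S_1,S_2$ are disjoint (capped-off) regular neighborhoods of the pairs $\{a,c\}$ and $\{b,d\}$ respectively, with $S_0$ the closure of the complement. The governing constraint is that $S_1$ and $S_2$ each contain a pair of non-isotopic simple closed curves, hence $\xi(S_i)\ge 1$ for $i=1,2$. First I would enumerate the possibilities for each $S_i$: a surface of complexity $1$ is $S_{0,4}$ or $S_{1,1}$, and a surface of complexity $2$ is $S_{0,5}$ or $S_{1,2}$. Then I would observe that since we are capping null-homotopic boundary components and $S_0$ has no punctured-disk components by construction, the additivity of $\xi$ (or equivalently of $3\cdot(\text{genus})-3+(\text{punctures})$ under gluing along curves) gives a budget: $\xi(S_1)+\xi(S_2)+\xi(S_0)$ plus correction terms for the gluing curves must total $\xi(S)=4$. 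Roughly, each gluing curve between two pieces contributes $+1$ to the complexity, so $\xi(S_1)+\xi(S_2)\le 4$ with the remainder distributed to $S_0$ and the number of cutting curves.

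Next I would split into cases according to $\xi(S_1)+\xi(S_2)\in\{2,3,4\}$ (each is at least $1$). If the sum is $4$, then $\xi(S_1)=\xi(S_2)=2$, forcing $S_0$ to consist entirely of annuli and pants with no extra complexity — but since $S_0$ must have at least two boundary components and must connect $S_1$ to $S_2$ (as $S$ is connected), and each $S_i$ of complexity $2$ already has at least $4$ boundary components, one checks this forces $S_0\approx S_{0,2}$ and exactly case (i), up to swapping. If the sum is $3$, say $\xi(S_1)=2,\xi(S_2)=1$, a similar count shows $S_0$ must be $S_{0,2}$ and again case (i) (with the $\xi=1$ piece playing the role of $S_2$), since there is no complexity budget left for $S_0$ to be anything larger. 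If the sum is $2$, then $\xi(S_1)=\xi(S_2)=1$, and there is a budget of $2$ left over for $S_0$ together with the cutting curves; here I would do a careful case analysis on how many boundary components of $S_0$ are glued to each of $S_1,S_2$ and whether $S_0$ is connected, producing cases (ii) through (v). The key numerical tool throughout is that gluing an annulus can only identify two boundary components (reducing the component/boundary count), while the total $\xi$ is rigidly controlled.

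The main obstacle I expect is the $\xi(S_1)=\xi(S_2)=1$ case, i.e. distinguishing cases (ii)–(v). Here one must track simultaneously: (a) whether $S_0$ is connected or has two components, (b) for each component of $S_0$, how many of its boundary circles are glued to $S_1$ and how many to $S_2$, and (c) the requirement that the curves $a,b,c,d$ remain essential and pairwise non-isotopic in $S$ — in particular $S_0$ cannot have an annular component both of whose boundary curves are isotopic in $S$ to a curve in the same $S_i$, else we could have reduced. I would organize this by first noting $S_0$ has at most two components (since $S_1,S_2$ have at most two and three boundary components respectively when of complexity $1$, and the total genus/puncture budget is exhausted), then handling the connected subcase (giving (ii), where $S_0\approx S_{0,3}$) separately from the two-component subcase (giving (iii), (iv), (v) according to the distribution of gluings, and the constraint that $S_{0,3}\cap S_1\approx S^1$ in (v) comes from the leftover complexity being a single curve). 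Verifying exclusivity — that the listed conditions are mutually incompatible — follows at once since they are distinguished by the homeomorphism types of the $S_i$ and $S_0$, which are determined data.
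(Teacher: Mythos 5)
Your overall strategy --- additivity of complexity across the decomposition $S=S_1\cup S_0\cup S_2$ followed by a case analysis on $\xi(S_1)+\xi(S_2)$ --- is the same as the paper's, but one of your steps is wrong and the hardest part is left undone. The correct bookkeeping is $\xi(S)=\xi(S_1)+\xi(S_2)+\xi(S_0)+\alpha$, where $\alpha$ counts the \emph{free isotopy classes} of the attaching curves of $S_0$, not the attaching curves themselves: an annulus component of $S_0$ joining $S_1$ to $S_2$ is glued along two circles but contributes only one isotopy class, which is exactly what makes configurations (iii)--(v) consistent with $\xi(S)=4$; your phrasing ``each gluing curve contributes $+1$'' would wrongly exclude them. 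Since $S$ is connected and $S_1\cap S_2=\varnothing$, we have $\alpha\ge 1$, hence $\xi(S_1)+\xi(S_2)\le 3$. Your case ``sum $=4$'' is therefore vacuous, and your conclusion that it ``forces $S_0\approx S_{0,2}$ and exactly case (i)'' is doubly wrong: that configuration would give $\xi(S)\ge 2+2+0+1=5$, and it does not match case (i), which requires $S_2\in\{S_{1,1},S_{0,4}\}$ of complexity one. As written, your argument would append a spurious case with $\xi(S_1)=\xi(S_2)=2$ to the lemma's list rather than prove the stated trichotomy.

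The second problem is that for $\xi(S_1)=\xi(S_2)=1$ --- the case that actually produces (ii)--(v) --- you only describe what you ``would do.'' Here the budget is $\xi(S_0)+\alpha=2$, and the essential first step is to exclude $\alpha=1$, $\xi(S_0)=1$ (the paper does this by noting that $\alpha=1$ forces $S_0$ to be a single annulus, which has complexity zero); only after that do you know $\xi(S_0)=0$ and $\alpha=2$, so that every component of $S_0$ is an annulus or a pair of pants and the enumeration by connectedness of $S_0$ and the distribution of attaching circles can proceed. Your stated reason that $S_0$ has at most two components (``$S_1,S_2$ have at most two and three boundary components'') is also incorrect --- an $S_{0,4}$ neighborhood can have up to four boundary circles in $S$; the bound again has to come from the complexity count together with the normalization that $S_0$ has no punctured-disk or inessential annulus components. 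So the proposal identifies the right framework, but it neither eliminates the impossible top case correctly nor actually derives cases (ii)--(v).
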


\begin{proof}
Let $\alpha$ be the number of free isotopy classes of boundary components of $S_0$ that are contained in $S_1\cup S_2$.
We have $\alpha>0$ since $S$ is connected and $S_1\cap S_2=\varnothing$.
Then $\xi(S) = \xi(S_1)+\xi(S_2)+\xi(S_0)+\alpha$; here, $\xi(S_0)$ is defined as the sum of the complexities of the components of $S_0$~\cite{BLM1983}.
It follows that $2\le \xi(S_1)+\xi(S_2)\le3$.

Let us first assume $\xi(S_1)+\xi(S_2)=3$. From $\xi(S_0)+\alpha=1$, we see that $S_0$ is an annulus joining $S_1$ and $S_2$. Case (i) is immediate. 

Now we assume $\xi(S_1)=\xi(S_2)=1$. If $\alpha=1$, then $S_0$ is forced to be an annulus and we have a contradiction of the fact that $\xi(S_0)+\alpha=2$. So we have $\xi(S_0)=0$ and $\alpha=2$.
If $S_0$ is connected, then $\alpha=2$ implies that $S_0$ cannot be an annulus, and hence, Case (ii) follows.
So we may assume $S_0$ is disconnected. 

Suppose $S_0\approx S_{0,2}\coprod S_{0,2}$. If each component of $S_0$ intersects both of $S_1$ and $S_2$, 
then each $S_i$ has at least two boundary components for $i=1,2$. In particular, $S_i\ne S_{1,1}$ and Case (iii) follows.
Without loss of generality, let us assume that one component of $S_0$ intersects only $S_1$. Then $S_1\ne S_{1,1}$
and we have Case (iv).

Let us finally assume $S_0\approx S_{0,2}\coprod S_{0,3}$. This is the only remaining case, for $\alpha=2$.
The subsurface $S_{0,3}$ can contribute only one to $\alpha$. In particular, $S_{0,3}$ is glued to say, $S_1$ but not $S_2$.
The annulus component of $S_0$ joins $S_1$ and $S_2$ and therefore, $S_1\ne S_{1,1}$ and Case (v) follows.
\end{proof}

The following special case of Theorem~\ref{t:main2} will be central to our discussion of surfaces with $\xi(S)\geq 4$:

\begin{lem}\label{lem:gam0}
Let $S$ be a surface with $\xi(S)=4$.  
There exists an embedding from $A(\gam_0)$ into $\Mod(S)$, but $\gam_0$ does not embed into $\mC(S)$ as an induced subgraph.
\end{lem}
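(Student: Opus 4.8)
The plan is to establish the two halves of the statement separately: the embedding $A(\gam_0)\hookrightarrow\Mod(S)$ comes for free from what we already have, while the non-embedding $\gam_0\not\le\mC(S)$ requires a combinatorial case analysis built on Lemma~\ref{lem:c4}.

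For the positive part, I would simply combine Lemma~\ref{l:gamma0embed} with Theorem~\ref{t:kob} (or, more sharply, part (1) of Theorem~\ref{t:raagmcg}) to get an embedding $A(\gam_1)\hookrightarrow\Mod(S)$ for $\xi(S)=4$, and then precompose with the injection $\phi\co A(\gam_0)\to A(\gam_1)$ supplied by Lemma~\ref{l:alaginj}. This immediately yields $A(\gam_0)\le\Mod(S)$, and it is essentially a one-line argument.

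For the negative part, suppose toward a contradiction that $\gam_0\le\mC(S)$, i.e.\ there is a configuration of simple closed curves on $S$ realizing $\gam_0$ as an induced subgraph. The vertices $a,b,c,d$ then span a $4$--cycle $C_4$ in $\mC(S)$, so I can apply the construction preceding Lemma~\ref{lem:c4}: form the subsurfaces $S_1=S_1(a,c)$, $S_2=S_2(b,d)$, $S_0=\overline{S\setminus(S_1\cup S_2)}$, and invoke Lemma~\ref{lem:c4} to pin down the homeomorphism types and adjacency pattern of the triple $(S_0,S_1,S_2)$ — one of the five cases (i)--(v). The remaining vertices of $\gam_0$, namely $q,g,h$ (and whatever the precise adjacencies to $\{a,b,c,d\}$ are in Figure~\ref{fig:gam}), must be realized by curves whose isotopy classes have the prescribed intersection pattern with $a,b,c,d$; the key point is that a curve disjoint from both $a$ and $c$ lives (up to isotopy) in $S_1$, a curve disjoint from both $b$ and $d$ lives in $S_2$, and a curve required to intersect some of $a,b,c,d$ is constrained accordingly. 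In each of the five cases I would count: the subsurfaces $S_1$, $S_2$ have complexity $1$ or $2$, hence $\mC(S_1)$, $\mC(S_2)$ are themselves rather small (triangle-free, as noted in the proof of Theorem~\ref{t:main1}), and $S_0$ has very few components of very low complexity; the vertices of $\gam_0$ that are forced to be disjoint from a given pair among $\{a,b,c,d\}$ must therefore fit inside one of these low-complexity pieces, and the induced-subgraph condition on $\gam_0$ (in particular the non-adjacencies among $q,g,h$ and their specified links) cannot be met. Running through cases (i)--(v) and deriving a contradiction in each is the bulk of the argument.

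The main obstacle is the case analysis for the negative direction: one must carefully track, for each of the five configurations of $(S_0,S_1,S_2)$, exactly which subsurface each of the curves for $q,g,h$ is forced into, and then check that the edge/non-edge relations of $\gam_0$ among these vertices (and with $a,b,c,d$) are incompatible with the limited room available. The subtlety is that some vertices may legitimately be realized by curves crossing the $C_4$-curves, so the argument is not purely "everything is disjoint hence small"; one has to use the specific structure of $\gam_0$ — in particular the fact that $\gam_0$ is $\gam_1$ with $e,f$ collapsed to $q$, so that $q$ inherits a link forcing it to be simultaneously compatible with too many constraints — to rule out the crossing configurations as well. I expect this to reduce, in each case, to the impossibility of embedding a specific small graph (something like a path or a $C_4$-plus-pendant) into a triangle-free curve graph of complexity $\le 2$, which is then elementary.
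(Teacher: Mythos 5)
Your positive half is exactly the paper's argument (compose the injection of Lemma~\ref{l:alaginj} with the embedding $A(\gam_1)\le\Mod(S)$ furnished by Lemma~\ref{l:gamma0embed} and Theorem~\ref{t:raagmcg}(1)), and your skeleton for the negative half --- assume $\gam_0\le\mC(S)$, apply Lemma~\ref{lem:c4} to the $C_4$ spanned by $\{a,b,c,d\}$, and kill each of the five cases --- is also the paper's. But the negative half contains a genuine error and then stops short of the actual argument. The error: your ``key point'' is backwards. A curve disjoint from both $a$ and $c$ does \emph{not} live in $S_1$; it can be isotoped \emph{off} $S_1$ (or is peripheral in it), since $S_1$ is a regular neighborhood of the two intersecting curves $a$ and $c$. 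The correct dictionary extracted from the adjacencies of $\gam_0$ is: $q$ is disjoint from all of $a,b,c,d$ and hence lies in $S_0$; $g$ can be isotoped off $S_1$ but must meet $S_2$; $h$ can be isotoped off $S_2$ but must meet $S_1$; and $q\cap g$, $q\cap h$, $g\cap h$ are all nonempty because those pairs are non-adjacent in $\gam_0$.

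The mechanism you propose for closing the cases --- reducing to the non-embeddability of a small graph into a triangle-free curve graph of complexity at most $2$ --- is not what does the work, and I do not see how it could: the curves $q,g,h$ are not confined to $S_1$ or $S_2$, so there is no low-complexity curve graph to embed anything into. What actually closes the cases is a separation argument using the fact that every component of $S_0$ is an annulus or a pair of pants. In cases (i), (iii), (iv) the components of $S_0$ are annuli joining $S_1$ to $S_2$, so a curve missing $S_1$ is forced into $S_2$ and vice versa; hence $g\subseteq S_2$ and $h\subseteq S_1$, contradicting $g\cap h\neq\varnothing$. In cases (ii) and (v), $q\subseteq S_0$ forces $q$ to be boundary-parallel in $S_0$, i.e.\ isotopic to $S_0\cap S_1$ or $S_0\cap S_2$; say $q$ is the boundary curve of $S_1$ (in case (v) one first argues $g\subseteq S_2$ because $S_1$ separates $S_2$ from the $S_{0,3}$ component). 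Any curve essentially meeting that boundary curve must enter $S_1$, so $g$, which misses $S_1$, cannot meet $q$, contradicting $q\cap g\neq\varnothing$. Without these two observations --- $q$ peripheral in $S_0$, and disjointness of $g$ and $h$ forced by separation --- the case analysis you outline does not terminate in a contradiction.
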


\begin{proof}
The first half of the conclusion follows from Lemmas \ref{l:alaginj} and \ref{l:gamma0embed}, combined with Theorem \ref{t:raagmcg} (1). For the second half, let us assume $\gam_0\le \mC(S)$ and regard the vertices $a,b,c,\ldots$ as simple closed curves on $S$. From $C_4\le \gam_0$, we have one of the five cases in Lemma~\ref{lem:c4}.
From the adjacency relations in $\gam_0$, we observe that
$q\cap g, q\cap h, g\cap S_2,h\cap S_1$ and $g\cap h$ are all non-empty,
and also that $q\subseteq S_0$ and $g \cap S_1 = h\cap S_2=\varnothing$.

In Case (i), the annulus $S_0$ connects $S_1$ and $S_2$. This implies that $g\subseteq S_2, h\subseteq S_1$ and so, $g\cap h=\varnothing$. This is a contradiction. In Case (iii) and (iv), we similarly obtain a contradiction from $g\cap h=\varnothing$.

In Case (ii), the curve $q$ must be boundary parallel in $S_0$. Hence, it must be either $S_0\cap S_1$ or $S_0\cap S_2$. By symmetry, we may assume $q = S_0\cap S_1$. Then $q$ separates $S_1$ from $S$, and so, $g\cap q \subseteq g\cap S_0 = \varnothing$. This is a contradiction.
The proof for Case (v) is similar and goes as follows. The subsurface $S_1$ separates $S_2$ and $S_{0,3}\subseteq S_0$. This forces $g\subseteq S_2$, so that $g\cap q \subseteq g\cap S_0 =\varnothing$.
\end{proof}

\begin{rem}\label{r:ext}
Since $\Gamma_1\le\mC(S)$, we have $\Gamma_1^e\le\mC(S)$ by~\cite{KK2013a}. Hence we have another example of graphs $\Gamma_0\not\le\Gamma_1^e$
but  $A(\Gamma_0)\le A(\Gamma_1)$; see~\cite{KK2013,CDK2013}.
\end{rem}

\subsection{Surfaces with complexity larger than four}
For a graph $X$, let us define $\eta(X)$ to be the minimum of $\xi(S)$ among connected  surfaces $S$ satisfying $X\le\mC(S)$.
Note that $\eta(X)$ is at least the size of a maximal clique in $X$. 
So far we proved that $\eta(\gam_0)>4$.
A graph is \emph{anti-connected} if its complement graph ${V(X)\choose 2}\setminus X$ is connected.  Note that the graphs $\gam_0$ and $\gam_1$ are both anti--connected.

\begin{lem}\label{l:eta join}
If $X$ is a finite anti-connected graph and $n\ge0$,
then $\eta(X\ast K_n) \ge \eta(X)+n$.
 \end{lem}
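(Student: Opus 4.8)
The plan is to prove the equivalent assertion: \emph{every connected surface $S$ with $X\ast K_n\le\mC(S)$ satisfies $\xi(S)\ge\eta(X)+n$.} Minimizing over all such $S$ then gives $\eta(X\ast K_n)\ge\eta(X)+n$; if no such $S$ exists the left-hand side is $\infty$ and there is nothing to prove, and the argument below will in any case produce a surface realizing $X$ whenever one realizes $X\ast K_n$, so that $\eta(X)$ is finite in the case that matters. So I would fix an induced embedding of $X\ast K_n$ into $\mC(S)$ and regard the vertices as essential, non-peripheral simple closed curves on $S$. The $n$ curves coming from the $K_n$ factor are pairwise disjoint and pairwise non-isotopic, hence form an essential multicurve $c=c_1\cup\dots\cup c_n$ with $n$ components; I write $S|c$ for $S$ cut along $c$. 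The cases in which $\mC(S)$ is discrete (notably $S$ a torus, and the two surfaces with $\xi(S)=1$) are disposed of immediately, since then $X\ast K_n$ has no edge, which forces $n\le 1$ together with $X$ empty or $n=0$, where the inequality is trivial; so I may assume $\xi(S)\ge 2$.

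The first step is that each vertex $v$ of $X$ is adjacent in $X\ast K_n$ to every vertex of the $K_n$ factor, so its curve is disjoint from $c$ and therefore lies, up to isotopy, in a \emph{uniquely determined} component of $S|c$ --- uniqueness holding because a curve isotopic into two distinct components would be isotopic to a component of $c$ or to a boundary/puncture of $S$, both impossible. The crux is to use anti-connectedness. Partition $V(X)=V_1\sqcup\dots\sqcup V_m$ according to which component of $S|c$ a vertex's curve lies in, discarding empty classes. If $m\ge 2$, then for $v\in V_i$ and $w\in V_j$ with $i\ne j$ the curves of $v$ and $w$ sit in distinct components of $S|c$, hence are disjoint, hence $v$ and $w$ are adjacent in $\mC(S)$, hence --- the embedding being \emph{induced} --- adjacent in $X$. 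Thus $X$ is the join of the subgraphs spanned by $V_1,\dots,V_m$, so its complement graph is disconnected, contradicting anti-connectedness. Therefore $m=1$, and all curves representing $V(X)$ lie in a single component $T$ of $S|c$.

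Next I would check that the embedding restricts to an induced embedding $X\le\mC(T)$: distinct vertices of $X$ give curves non-isotopic in $S$ and hence non-isotopic in $T$; each such curve is essential and non-peripheral in $T$ (else it would be inessential or peripheral in $S$, or isotopic to a component of $c$); and, $T$ being an incompressible subsurface of $S$, two such curves have disjoint representatives in $T$ if and only if they do in $S$, so adjacency is both preserved and reflected. Hence $\eta(X)\le\xi(T)$ by definition of $\eta$. To conclude I would invoke additivity of complexity under cutting along a multicurve, exactly as in the proof of Lemma~\ref{lem:c4} (cf.\ \cite{BLM1983}): since $\xi(S)\ge 2$, the surface $S|c$ has no disk or annulus component, and $\xi(S)=n+\sum_{T'}\xi(T')$ with the sum over the components $T'$ of $S|c$, all of non-negative complexity. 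Dropping every summand except the one for $T$ yields $\xi(S)\ge n+\xi(T)\ge n+\eta(X)$, which is what we wanted.

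The routine parts are the incompressible-subsurface bookkeeping and the complexity additivity, both standard. The step I expect to be the main obstacle is the join dichotomy: one has to be careful that each curve of $X$ has an unambiguous home component of $S|c$, that the induced partition of $V(X)$ genuinely yields a graph join (this is exactly where ``induced subgraph'' is used, not merely ``subgraph''), and that anti-connectedness is precisely the hypothesis forbidding a nontrivial join --- together with quarantining the low-complexity surfaces where $\mC(S)$ degenerates.
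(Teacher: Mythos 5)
Your proof is correct and follows essentially the same route as the paper's: cut along the multicurve coming from $K_n$, use anti-connectedness (via the join/disconnected-complement dichotomy) to confine all curves of $X$ to a single complementary component $T$, and conclude with additivity of complexity. You simply spell out the details the paper leaves implicit (the induced-join argument, the restriction $X\le\mC(T)$ being induced, and the degenerate low-complexity cases).
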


\begin{proof}
Choose a surface $S$ such that
 $\xi(S)=\eta(X\ast K_n)$ and $X\ast K_n\le \mC(S)$.
Let $N$ denote a regular neighborhood of curves in $K_n$.
Since the graph $X$ is anti-connected, the curves in $V(X)$ are contained in a component $S_1$ of $S\setminus N$.
Since $X\le \mC(S_1)$, we have $\xi(S)=\xi(S\setminus N)+n \ge \xi(S_1)+n\ge \eta(X)+n$. 
\end{proof}

Put $\Lambda_n=\gam_0\ast K_{n-4}$ for $n\ge4$. Theorem~\ref{t:main2} is an immediate consequence of the following.
\begin{prop}\label{p:lambdan}
If $S$ is a surface with $\xi(S)=n$, then $A(\Lambda_n)$ embeds into $\Mod(S)$ but
$\Lambda_n$ is not an induced subgraph of $\mC(S)$.
\end{prop}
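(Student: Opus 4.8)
The plan is to prove both halves of the statement by induction on $n$, using the complexity-four case (Lemma~\ref{lem:gam0}) as the base case and the graph-join operation $X\mapsto X\ast K_1$ to climb one complexity at a time. Recall $\Lambda_n=\gam_0\ast K_{n-4}$, so $\Lambda_4=\gam_0$ and $\Lambda_{n+1}=\Lambda_n\ast K_1$.

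\emph{The embedding $A(\Lambda_n)\le\Mod(S)$.} For the base case $n=4$, this is exactly the first half of Lemma~\ref{lem:gam0}. For the inductive step, suppose $\xi(S)=n+1$. First I would locate inside $S$ a simple closed curve $c$ cutting off a subsurface $S'$ with $\xi(S')=n$ and with $S\setminus S'$ still of positive complexity (this is possible for any surface of complexity $\geq 2$; one can take $c$ to be a separating curve or a curve bounding a once-punctured torus / four-holed sphere, depending on $S$). By the inductive hypothesis, $A(\Lambda_n)\le\Mod(S')\le\Mod(S)$, where the last inclusion is the natural one induced by inclusion of subsurfaces and is injective because $\xi(S')\ge1$. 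Now $c$ is disjoint from $S'$, so the Dehn twist $T_c$ commutes with the entire image of $A(\Lambda_n)$; moreover $T_c$ has infinite order. Hence the subgroup generated by the image of $A(\Lambda_n)$ and by $T_c$ is a quotient of $A(\Lambda_n)\times\mathbb{Z}=A(\Lambda_n\ast K_1)=A(\Lambda_{n+1})$. To see it is actually a faithful copy, one checks that $T_c$ is not in the image of $A(\Lambda_n)$: every element of that image is supported in $S'$, which is disjoint from $c$, so no such element can equal a nontrivial power of $T_c$; since $A(\Lambda_n)$ has trivial center intersection with this $\mathbb{Z}$ only if \dots — more cleanly, one invokes the standard fact (as in the proof of Lemma~\ref{lem:standard}, citing~\cite{Koberda2012,CLM2012}) that if $f\colon A(\Lambda_n)\hookrightarrow\Mod(S')$ is an embedding by multi-twists and $T_c$ is a twist about a curve disjoint from all supports, then $v\mapsto f(v)$, $w\mapsto T_c^M$ extends to an embedding of $A(\Lambda_n\ast K_1)$ for $M$ large; equivalently, use Theorem~\ref{t:raagmcg}(1) after noting $\Lambda_{n+1}\le\mC(S)$ would suffice but is false, so one argues directly with disjointness of supports.

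\emph{The non-embedding $\Lambda_n\not\le\mC(S)$.} This is the cleaner half and follows immediately from the machinery already set up. For $n=4$ it is the second half of Lemma~\ref{lem:gam0}. For $n>4$, suppose for contradiction that $\Lambda_n=\gam_0\ast K_{n-4}\le\mC(S)$ with $\xi(S)=n$. Since $\gam_0$ is anti-connected, Lemma~\ref{l:eta join} (applied with $X=\gam_0$, so $n$ there equal to $n-4$ here) gives
\[
\xi(S)\ \geq\ \eta(\gam_0\ast K_{n-4})\ \geq\ \eta(\gam_0)+(n-4)\ \geq\ 5+(n-4)\ =\ n+1,
\]
using $\eta(\gam_0)>4$, i.e.\ $\eta(\gam_0)\ge5$, which we established via Lemma~\ref{lem:gam0}. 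This contradicts $\xi(S)=n$.

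\emph{Main obstacle.} The non-embedding half is essentially automatic given Lemma~\ref{l:eta join} and the bound $\eta(\gam_0)\ge5$. The real work is the embedding half, specifically the faithfulness check in the inductive step: one must be sure that adjoining the twist $T_c$ about a curve cutting off $S'$ genuinely realizes the \emph{direct product} $A(\Lambda_n)\times\mathbb{Z}$ and not a proper quotient. The safest route is to carry the induction with the stronger hypothesis that the embedding is \emph{standard} (in the sense of Definition~\ref{defn:standard}), so that each vertex of $\Lambda_n$ maps to a multi-twist supported in $S'$; then disjointness of $\supp T_c$ from all these supports, together with the standard power-raising argument of~\cite{Koberda2012,CLM2012} used already in Lemma~\ref{lem:standard}, produces a standard embedding of $A(\Lambda_{n+1})$ and closes the induction. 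One should also double-check the very first step for the three borderline surfaces of complexity $5$ — that a complexity-$n$ surface with $n\ge5$ always admits a separating curve leaving a complexity-$n{-}1$ piece and a positive-complexity complement — but this is a routine Euler-characteristic count.
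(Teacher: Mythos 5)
Your non-embedding half is correct and is exactly the paper's argument: Lemma~\ref{l:eta join} applied to the anti-connected graph $\gam_0$, together with $\eta(\gam_0)>4$ from Lemma~\ref{lem:gam0}, gives $\eta(\Lambda_n)\ge \eta(\gam_0)+n-4>n$. No complaints there.

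The embedding half, however, has a genuine gap, and you have in effect flagged it yourself. Your induction adjoins one Dehn twist $T_c$ at a time to an embedding $A(\Lambda_n)\hookrightarrow\Mod(S')$, and the whole argument rests on verifying that $\langle f(A(\Lambda_n)),T_c^M\rangle$ is the full direct product rather than a proper quotient. You never close this: you first offer an argument that only shows $T_c$ is not in the image of $f$ (which does not rule out a proper quotient of $A(\Lambda_n)\times\Z$), then concede that Theorem~\ref{t:raagmcg}(1) is unavailable because $\Lambda_{n+1}\not\le\mC(S)$, and finally defer to ``the standard power-raising argument'' carried through a strengthened inductive hypothesis. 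Making that rigorous is not free: after passing from $S'$ to $S$ you must re-verify that the supporting curves remain pairwise non-isotopic in $S$ and that their co-intersection pattern is unchanged (note also that your requirement that $S\setminus S'$ have positive complexity is impossible when $\xi(S')=n$ and $\xi(S)=n+1$, and that cutting along a non-separating $c$ makes $\Mod(S')\to\Mod(S)$ non-injective). The idea you are missing is the one the paper uses: do not try to propagate $\gam_0$ (which never sits in a curve graph); instead use $\gam_1$, which does. Choose a multicurve $X$ on $S$ with $n-4$ components whose complement has a connected component of complexity at least four; this realizes $\gam_1\ast K_{n-4}$ as an induced subgraph of $\mC(S)$, so a single application of Theorem~\ref{t:raagmcg}(1) gives $A(\gam_1)\times\Z^{n-4}\le\Mod(S)$ with no faithfulness bookkeeping, and composing with the injection $A(\gam_0)\hookrightarrow A(\gam_1)$ of Lemma~\ref{l:alaginj} yields $A(\Lambda_n)\le\Mod(S)$ in one step, with no induction at all.
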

\begin{proof}
Choose a multicurve $X$ on $S$ with $n-4$ components such that $S\setminus X$ has a connected component $S_0$ of complexity at least four.  We have that $\mC(S_0)$ contains a copy of $\gam_1$, so that $A(\Gamma_1)\times\bZ^{n-4}$ embeds in $\Mod(S)$.  It follows that $A(\Gamma_0)\times\bZ^{n-4}\cong A(\Lambda_n)$ embeds in $\Mod(S)$.
On the other hand, Lemma~\ref{l:eta join} implies that
$\eta(\Lambda_n) 
\ge \eta(\Gamma_0) +n-4 >n$.
\end{proof}

\section{Proof of Theorem~\ref{t:nthick}}
In this section, we give a proof of Theorem \ref{t:nthick}.  
For a multi-curve $A$ on a surface $S$, we denote by $\form{A}$ the subgroup of $\Mod(S)$ generated by the Dehn twist about the curves in $A$.

\begin{proof}[Proof of Theorem \ref{t:nthick}]
By Lemma~\ref{lem:standard}, there exists a standard embedding $\phi:\aga\to\Mod(S)$. 
Let $v$ be an arbitrary vertex of $\gam$.  Write $K$ and $L$ for two disjoint cliques of $\gam$ such that
$K\coprod\{v\}$ and $L\coprod\{v\}$ are cliques on $N$ vertices.
The support of $\phi\form{K}$ is a multi-curve, say $A$.
Similarly we write $B=\supp\phi\form{L}$ and $C=\supp\phi\form{v}$.
Since $\xi(S)=N$, the multi-curves $A\cup C$ and $B\cup C$ are maximal. 
Note that $\form{C}$ is a subgroup of $\form{A\cup C}\cap \form{B\cup C}$.
In the diagram below, 
we see that $\phi\form{v}$ is of finite-index in $\form{C}\cong\Z^{|C|}$
and hence, $|C|=1$.
 \end{proof}

\begin{figure}[htb!]
\[
\xymatrix{ 
&\phi(A(\Gamma))\le\Mod(S)\ar@{-}[dl]\ar@{-}[dr]&\\
\form{A\cup C}\cong\Z^N\ar@{-}[d]^{\text{finite-index}}\ar@{-}[dr]
&&\form{B\cup C}\cong\Z^N\ar@{-}[d]^{\text{finite-index}}\ar@{-}[dl] \\
\phi\form{K,v}\cong\Z^N\ar@{-}[dr]& \form{C}\ar@{-}[d] &\phi\form{L,v}\cong\Z^N\ar@{-}[dl]\\
&\phi(\form{K,v}\cap\form{L,v})=\phi\form{v}\cong\Z
}
\]
  \label{fig:thick proof}
\end{figure}

\section{Remarks on intermediate complexity surfaces}\label{s:intermediate}
There are only three surfaces of complexity three: $S_{2,0}$, $S_{1,3}$ and $S_{0,6}$.  From the perspective of Theorem \ref{t:main2}, these three surfaces collapse into at most two cases:

\begin{lem}\label{l:reduction}
Either conclusion of Theorem \ref{t:main1} or~\ref{t:main2} holds for $S\cong S_{0,6}$ if and only if it holds for $S\cong S_{2,0}$.
\end{lem}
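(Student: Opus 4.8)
The plan is to exploit the well-known hyperelliptic relationship between $S_{2,0}$ and $S_{0,6}$: the surface $S_{2,0}$ admits a hyperelliptic involution $\iota$ whose quotient orbifold is a sphere with six cone points of order two, so that the orbifold covering induces a natural map between $\Mod(S_{2,0})$ and $\Mod(S_{0,6})$. Concretely, if $\mathrm{SMod}(S_{2,0})$ denotes the centralizer of $\iota$ in $\Mod(S_{2,0})$ (the \emph{hyperelliptic mapping class group}), then there is a surjection $\mathrm{SMod}(S_{2,0})\to\Mod(S_{0,6})$ with kernel $\langle\iota\rangle\cong\Z/2\Z$, and $\iota$ is central in $\Mod(S_{2,0})$ itself, so in fact $\Mod(S_{2,0})/\langle\iota\rangle\cong\Mod(S_{0,6})$ directly. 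The first step is to record this isomorphism precisely, together with the fact that it is compatible with the curve graphs: simple closed curves on $S_{0,6}$ lift to $\iota$--invariant simple closed curves on $S_{2,0}$, and two curves on $S_{0,6}$ are disjoint if and only if their lifts are, so $\mC(S_{0,6})\cong\mC(S_{2,0})^{\iota}$, the full subgraph of $\mC(S_{2,0})$ spanned by symmetric curves; moreover every curve on $S_{2,0}$ is isotopic to a symmetric one, so this inclusion is in fact an isomorphism of the abstract graphs $\mC(S_{0,6})\cong\mC(S_{2,0})$.

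Next I would translate both statements ``$A(\Gamma)\le\Mod(S)$ implies $\Gamma\le\mC(S)$'' (the content of either conclusion, since Theorem~\ref{t:raagmcg}(1) always gives the converse) into statements about $\Mod(S_{2,0})$. For the direction ``conclusion holds for $S_{0,6}$ $\Rightarrow$ holds for $S_{2,0}$'': given an embedding $A(\Gamma)\le\Mod(S_{2,0})$, apply Lemma~\ref{lem:standard} to arrange that it is standard, so each vertex maps to a multi-twist along a multicurve on $S_{2,0}$; by the previous paragraph each such multicurve may be taken $\iota$--symmetric, so the image lies in (a conjugate of) $\mathrm{SMod}(S_{2,0})=\Mod(S_{2,0})$, and after quotienting by $\langle\iota\rangle$ we get a homomorphism $A(\Gamma)\to\Mod(S_{0,6})$. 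The key point is to check this remains \emph{injective}: since $\iota$ is central and of order two, the kernel is $A(\Gamma)\cap\langle\iota\rangle$, which is trivial because right-angled Artin groups are torsion-free. Thus $A(\Gamma)\le\Mod(S_{0,6})$, whence $\Gamma\le\mC(S_{0,6})\cong\mC(S_{2,0})$. Conversely, given $A(\Gamma)\le\Mod(S_{0,6})$, pull back along the surjection $\Mod(S_{2,0})\to\Mod(S_{0,6})$: lift each generator to $\Mod(S_{2,0})$ and note that the resulting subgroup surjects onto $A(\Gamma)$ with central $\Z/2$ kernel, so it contains a copy of $A(\Gamma)$ (again using torsion-freeness: a central extension of a torsion-free group by $\Z/2$ splits over any torsion-free subgroup meeting the center trivially, and one can pass to such a subgroup), giving $A(\Gamma)\le\Mod(S_{2,0})$, hence $\Gamma\le\mC(S_{2,0})\cong\mC(S_{0,6})$.

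The step I expect to be the main obstacle is the careful handling of the central $\Z/2\Z$ extension in the ``$\Rightarrow$'' direction: one must ensure that after making the embedding standard and symmetric, the image genuinely lies in $\mathrm{SMod}(S_{2,0})$ (not merely up to isotopy of each individual generator, but as an honest subgroup), and that descent to the quotient does not create new relations. The cleanest route is probably to observe that a standard embedding lands in the subgroup generated by Dehn twists about a fixed finite collection of symmetric curves, which is visibly contained in $\mathrm{SMod}(S_{2,0})$, and then invoke torsion-freeness of $A(\Gamma)$ to kill the kernel. Everything else — the identification of the curve graphs and the lifting in the converse direction — is standard Birman–Hilden theory and elementary group theory, so the lemma follows.
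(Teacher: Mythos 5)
Your strategy is at bottom the same as the paper's, with the citations unpacked. The paper's proof is two sentences: $\Mod(S_{2,0})$ and $\Mod(S_{0,6})$ are commensurable (Farb--Margalit, Theorem 9.2, which is precisely the hyperelliptic statement $\Mod(S_{2,0})/\langle\iota\rangle\cong\Mod(S_{0,6})$ you invoke), so a right-angled Artin group embeds in one if and only if it embeds in the other; and the curve graphs $\mC(S_{2,0})$ and $\mC(S_{0,6})$ are isomorphic (Rafi--Schleimer), so they have the same finite induced subgraphs. You reconstruct both facts by hand, which is legitimate, but two of your justifications are not correct as written.

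First, it is not true that every simple closed curve on $S_{0,6}$ lifts to a single $\iota$--invariant curve on $S_{2,0}$: a curve enclosing an even number of the six branch points has disconnected preimage (two isotopic curves cobounding an annulus), so the isomorphism $\mC(S_{0,6})\cong\mC(S_{2,0})$ is not literally ``symmetric curves modulo $\iota$.'' The isomorphism is nevertheless true and standard (curves enclosing two points correspond to nonseparating curves upstairs, curves enclosing three points to separating ones), but your mechanism misdescribes it; it is cleaner to cite it, as the paper does. Second, your argument that the preimage $E$ of $A(\Gamma)$ in $\Mod(S_{2,0})$ contains a copy of $A(\Gamma)$ is circular: ``one can pass to such a subgroup'' is exactly the subgroup you need to exhibit, and central extensions of torsion-free groups by $\Z/2$ need not split, since $H^2(A(\Gamma);\Z/2)$ is typically nonzero. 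The standard repair is to lift each vertex generator $v$ to $\tilde v\in E$ and map $v\mapsto\tilde v^2$: if $[v,w]=1$ then $[\tilde v,\tilde w]$ lies in the central subgroup $\langle\iota\rangle$, so $[\tilde v^2,\tilde w^2]=[\tilde v,\tilde w]^4=1$, giving a homomorphism $A(\Gamma)\to E$ whose composition with $E\to A(\Gamma)$ is $v\mapsto v^2$, which is injective. With those two patches your proof is complete; the detour through standard embeddings and symmetric representatives in the forward direction can be deleted entirely, since $\iota$ is central and any torsion-free subgroup of $\Mod(S_{2,0})$ already injects into the quotient.
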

\begin{proof}
It is well--known that $\Mod(S_{2,0})$ and $\Mod(S_{0,6})$ are commensurable (see \cite{FM2011}, Theorem 9.2, for instance).  It follows that $\aga<\Mod(S_{2,0})$ if and only if $\aga<\Mod(S_{0,6})$.  It is also well--known (see \cite{RS2011}, for instance) that the curve complexes $\mC(S_{2,0})$ and $\mC(S_{0,6})$ are isomorphic (in fact, the fact that the mapping class groups are commensurable implies that the curve graphs are isomorphic; see \cite{acyl}, Lemma 3 and Proposition 4).  In particular, the two curve graphs have the same finite subgraphs.  The lemma follows immediately.
\end{proof}

We do not know how to resolve the case $\xi(S)=3$ at this time.
\bibliographystyle{amsplain}
\def\soft#1{\leavevmode\setbox0=\hbox{h}\dimen7=\ht0\advance \dimen7
  by-1ex\relax\if t#1\relax\rlap{\raise.6\dimen7
  \hbox{\kern.3ex\char'47}}#1\relax\else\if T#1\relax
  \rlap{\raise.5\dimen7\hbox{\kern1.3ex\char'47}}#1\relax \else\if
  d#1\relax\rlap{\raise.5\dimen7\hbox{\kern.9ex \char'47}}#1\relax\else\if
  D#1\relax\rlap{\raise.5\dimen7 \hbox{\kern1.4ex\char'47}}#1\relax\else\if
  l#1\relax \rlap{\raise.5\dimen7\hbox{\kern.4ex\char'47}}#1\relax \else\if
  L#1\relax\rlap{\raise.5\dimen7\hbox{\kern.7ex
  \char'47}}#1\relax\else\message{accent \string\soft \space #1 not
  defined!}#1\relax\fi\fi\fi\fi\fi\fi}
\providecommand{\bysame}{\leavevmode\hbox to3em{\hrulefill}\thinspace}
\providecommand{\MR}{\relax\ifhmode\unskip\space\fi MR }
\providecommand{\MRhref}[2]{%
  \href{http://www.ams.org/mathscinet-getitem?mr=#1}{#2}
}
\providecommand{\href}[2]{#2}

\end{document}